\theoremstyle{plain} 
\newtheorem{theorem}{\indent\sc Theorem}[section]
\newtheorem{lemma}[theorem]{\indent\sc Lemma}
\newtheorem{proposition}[theorem]{\indent\sc Proposition}
\theoremstyle{definition} 
\newtheorem{definition}[theorem]{\indent\sc Definition}
\newtheorem{example}[theorem]{\indent\sc Example}
\newtheorem*{remark0}{\indent\sc Remark}
\newcommand{\Hom}{\mathrm{Hom}}
\newcommand{\Ext}{\mathrm{Ext}}
\newcommand{\Z}{\mathbb{Z}}
\newcommand{\F}{\mathbb{F}}
\newcommand{\Sq}{\mathrm{Sq}}
\newcommand{\A}{{\mathcal{A}}}
\newcommand{\syz}{\mathrm{Syz}}
\newcommand{\Groebner}{Gr\"{o}bner }
\newcommand{\s}{\mathbf{s}}
\newcommand{\e}{\mathbf{e}}
\newcommand{\lp}{\mathrm{lp}}
\newcommand{\lm}{\mathrm{lm}}
\newcommand{\lc}{\mathrm{lc}}
\newcommand{\lt}{\mathrm{lt}}
\newcommand{\lcm}{\mathrm{lcm}}
\newcommand{\X}{\mathbf{X}}
\newcommand{\Y}{\mathbf{Y}}
\newcommand{\g}{\mathbf{g}}
\newcommand{\Syz}{\mathrm{Syz}}
\newcommand{\Lt}{\mathrm{Lt}}
\renewcommand{\vec}[1]{\mathbf{#1}}
\def\address#1#2{\begingroup
\noindent\parbox[t]{7.8cm}{%
\small{\scshape\ignorespaces#1}\par\vskip1ex
\noindent\small{\itshape E-mail address}%
\/: #2\par\vskip4ex}\hfill%
\endgroup}%
\title{\uppercase{Projective resolution of modules over the
noncommutative algebra}}
\author{
%
\bigskip \\
\textsc{Tomohiro Fukaya$^{*}$} 
}
\date{} 
\begin{document}

\maketitle

\footnote{ 
2000 \textit{Mathematics Subject Classification}.
Primary 18G15; Secondary 55S10.
}
\footnote{ 
\textit{Key words and phrases}. 
Steenrod algebra, Projective resolution, Gr\"{o}bner basis.
}
\footnote{ 
$^{*}$The author is supported by Grant-in-Aid for JSPS Fellows 
(19$\cdot$3177) from Japan Society for the Promotion
of Science.
}

\begin{abstract}
We give an explicit algorithm to compute a projective resolution of a
 module over the noncommutative ring based on the noncommutative
 Gr\"obner bases theory.
\end{abstract}

\section*{Introduction} 
Let $K$ be a field and $\Gamma$ be a ring over $K$. We generally assume
that $\Gamma$ has a unit, $\epsilon\colon K\rightarrow \Gamma$, as well
as an augmentation $\eta \colon \Gamma \rightarrow K$.
For graded $\Gamma$-module $M$ and $N$, $\Ext_\Gamma^{*,*}(M,N)$ is
defined with a projective resolution of $M$. This $\Ext$-functor appears
in various areas. In algebraic topology, it appears as a $E_2$-terms of
the Adams spectral sequence, which is one of the most important tools to
compute homotopy sets. Especially, it is given by
$\Ext_{\A_p}^{*,*}(\F_p,\F_p)$that $E_2$-terms of the spectral sequence
which converges to the stable homotopy groups of the sphere
${}_{(p)}\Pi_S^*$.
Here $\A_p$ denotes the mod $p$ Steenrod algebra and 
\begin{eqnarray*}
  \Pi_S^* &=& \bigoplus_k \lim_{n\rightarrow \infty}[S^{n+k}, S^n]\\ 
  {}_{(p)}\Pi_S^* &=& \Pi_S^* / \{\text{elements of finite order 
  prime to }p\}.
\end{eqnarray*}

The study of the stable homotopy groups of sphere has a long history, so
many tools has been developed. For example, there exists a spectral
sequence converging to $\Ext_{\A_p}^{*,*}(\F_p,\F_p)$.

In this paper, we study an elementary algorithm to compute the
$\Ext$-functor directly from its definition, that is, to compute a
projective resolution of $\Gamma$-modules.


A brief explanation of $\Ext$-functor can be seen in the section 9.2 of 
\cite{MR1793722}.
We fix a positive integer $k$. It is enough to compute $\Ext^{s,t}(M,-)$
for $0 \leq t \leq k$ that we have a sequence of degree-preserving 
$\Gamma$-homomorphisms between graded $\Gamma$ modules
\begin{eqnarray}
\label{eq:proj_resolution_k}
   0 \xleftarrow{} M \xleftarrow{\epsilon} P_0 \xleftarrow{d_0} P_1
  \xleftarrow{d_1} P_2 \xleftarrow{d_2} \cdots \xleftarrow{d_{n-1}} P_n \xleftarrow{d_n}
  \cdots 
\end{eqnarray}
which is exact in degree less than or equal to $k$, and each $P_i$
is a projective $\Gamma$-module. A resolution
(\ref{eq:proj_resolution_k}) is called minimal if 
$d_i(P_{i+1}) \subset I(\Gamma)\cdot P_i$ for all $i\geq 0$ where
$I(\Gamma) = \ker(\eta\colon \Gamma \rightarrow K)$
\begin{proposition}
 Let $0 \xleftarrow{} M \xleftarrow{\epsilon} P_0 \xleftarrow{d_0} P_1
  \xleftarrow{d_1} P_2 \xleftarrow{d_2} \cdots \xleftarrow{d_{n-1}} P_n
 \xleftarrow{d_n} \cdots $ be a minimal resolution of $M$ by projective
 $\Gamma$ modules. Then $\Ext_\Gamma^{s}(M,K)\cong \Hom(p_s,K)$.
\end{proposition}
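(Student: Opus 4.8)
The plan is to compute $\Ext_\Gamma^s(M,K)$ directly from its definition as the cohomology of the cochain complex obtained by applying $\Hom_\Gamma(-,K)$ to the given resolution, and then to observe that minimality forces every coboundary map of this complex to vanish.

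First I would recall the standard fact that $\Ext_\Gamma^{*}(M,-)$ may be computed from \emph{any} projective resolution of $M$, so we are entitled to use the given minimal one. Thus $\Ext_\Gamma^s(M,K) = H^s\big(\Hom_\Gamma(P_\bullet,K),\, d_\bullet^{*}\big)$, where $d_i^{*} = \Hom_\Gamma(d_i,K)$ sends $f\colon P_i\to K$ to $f\circ d_{i}\colon P_{i+1}\to K$ (with the internal degree shift appropriate to the graded category). Hence it suffices to show $d_i^{*}=0$ for every $i\ge 0$: then $\ker d_s^{*} = \Hom_\Gamma(P_s,K)$ while $\mathrm{im}\,d_{s-1}^{*}=0$, which yields the asserted isomorphism $\Ext_\Gamma^s(M,K)\cong\Hom_\Gamma(P_s,K)$.

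The key step is the vanishing of $d_i^{*}$, and it rests on two inputs: (i) $K$ is a $\Gamma$-module via the augmentation, so that $a\cdot\lambda=\eta(a)\lambda$ for $a\in\Gamma$ and $\lambda\in K$; and (ii) minimality, i.e. $d_i(P_{i+1})\subseteq I(\Gamma)\cdot P_i$ with $I(\Gamma)=\ker\eta$. Given $f\in\Hom_\Gamma(P_i,K)$ and $x\in P_{i+1}$, I would write $d_i(x)=\sum_j a_j m_j$ with $a_j\in I(\Gamma)$ and $m_j\in P_i$; then $\Gamma$-linearity of $f$ gives $f(d_i(x))=\sum_j a_j f(m_j)=\sum_j \eta(a_j)f(m_j)=0$, so $d_i^{*}(f)=0$. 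This is really the whole content: the word ``minimal'' is precisely the condition under which the $\Hom_\Gamma(-,K)$-dual complex has zero differentials.

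I do not anticipate a serious obstacle. The only points requiring a little care are the bookkeeping in the graded setting — the maps $d_i$ and the groups $\Hom_\Gamma(P_i,K)$ carry internal degrees, and since the resolution is only assumed exact through degree $k$, the identification $\Ext^{s,t}_\Gamma(M,K)\cong\Hom_\Gamma(P_s,K)^{t}$ should be stated in the range $0\le t\le k$ — together with the routine invocation of the independence of $\Ext$ from the chosen projective resolution. If desired, the same computation can be rephrased through the natural isomorphism $\Hom_\Gamma(P_s,K)\cong\Hom_K(P_s\otimes_\Gamma K,\,K)$, valid since each $P_s$ is projective, which also makes transparent that the answer depends only on the ``number of generators'' of $P_s$ in each degree.
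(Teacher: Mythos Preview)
Your argument is correct and is exactly the standard proof of this well-known fact: minimality forces $d_i(P_{i+1})\subset I(\Gamma)P_i$, and since $I(\Gamma)$ acts trivially on $K$, every induced map $d_i^{*}=\Hom_\Gamma(d_i,K)$ vanishes, whence the cohomology of $\Hom_\Gamma(P_\bullet,K)$ collapses to $\Hom_\Gamma(P_s,K)$.

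There is nothing to compare against: the paper states this proposition without proof, treating it as background (the surrounding text cites the standard reference for $\Ext$ and immediately moves on to the algorithmic content). Your write-up supplies precisely the expected justification, including the caveat about the graded range $0\le t\le k$ when the resolution is only exact through degree $k$, which is consistent with how the paper uses the proposition later.
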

We study the explicit algorithm to compute such a
resolution. 

\begin{definition}
 Let $P$ be a graded $\Gamma$-module. $\{\g_1,\dots,\g_N\}\subset P$ 
is $(\Gamma,k)$-generating set of $P$ if inclusion
\[
 \langle \g_1,\dots, \g_N \rangle \hookrightarrow P
\]
induces an isomorphism of $K$-vector spaces in degree less than or
 equal to $k$.
\end{definition}
Suppose that we have a part of the resolution
(\ref{eq:proj_resolution_k})
\begin{eqnarray}
\label{eq:part_of_resolution}
   P_{n-1} \xleftarrow{d_{n-1}} P_n 
   \xleftarrow{d_n} P_{n+1}.
\end{eqnarray}
We also suppose that we have a $\Gamma$-basis $\{\e_1, \dots, \e_N\}$ 
(resp. $\{\e'_1,\dots, \e'_M\}$) of $P_{n}$ (resp. $P_{n+1}$).
That is,
\[
 P_{n} \cong \bigoplus_{i=1}^N \Gamma\e_i \text{ and } 
 P_{n+1} \cong \bigoplus_{i=1}^M \Gamma\e'_i.
\]
We assume that $\{d_n(\e'_1),\dots,d_n(\e'_M)\}$ is minimal generating
set in the sense of Definition~\ref{def:minimal_generating_set}.
 To extend
the resolution (\ref{eq:part_of_resolution}), we need to know an
$(\Gamma,k)$-generating set $\{\vec{h}_1,\dots, \vec{h}_L\}$ of 
\[
 \ker\left[d_n\colon \bigoplus_{i=1}^{M} \Gamma\e'_i 
            \rightarrow \bigoplus_{i=j}^{N} \Gamma\e_j\right].
\]
Section~\ref{sec:proj-resol-over} gives us an algorithm to compute it.
Especially, we can choose $\{\vec{h}_1,\dots, \vec{h}_L\}$ as a minimal
generating set in the sense of
Definition~\ref{def:minimal_generating_set}.

Set $P_{n+2} = \bigoplus_{i=1}^L \Gamma\e''_i, |\e''_i| = |\vec{h}_i|$
 for $i = 1,\dots,L$. We define the
$\Gamma$-homomorphism
\[
 d_{n+1}\colon \bigoplus_{i=1}^{L} \Gamma\e''_i 
            \rightarrow \bigoplus_{i=1}^{M} \Gamma\e'_i
\]
by $d_{n+1}(\e''_i) = \vec{h}_i$.
Then we have a longer resolution
\begin{eqnarray*}
   P_{n-1} \xleftarrow{d_{n-1}} P_n 
   \xleftarrow{d_n} P_{n+1}
   \xleftarrow{d_{n+1}} P_{n+2}
\end{eqnarray*}
which is minimal and exact in degree less than or equal to $k$ 
(see Proposition~\ref{prop:minimal}). Thus we
have $\Ext^{n+1,t}_\Gamma(M,K) \cong \Hom^t_\Gamma(P_{n+1},K)$ for 
$t \leq k$.
\section{Noncommutative ring}
The most important theory in computational algebra is Gr\"obner basis
theory. In this paper, we are interested in noncommutative algebras. Thus
we need to construct noncommutative Gr\"obner basis theory. It  is now
used in various fields. Such fields are listed in 
the introduction of \cite{MR1947291}. In this section, we give the 
noncommutative Gr\"obner basis theory following the commutative theory given
in \cite{MR1287608}.
\subsection{Monomial order and division algorithm}
Let $K$ be a field and 
$R = K \langle x_1, x_2, \cdots \rangle$ be a free non commutative
graded $K$-algebra with grading $|x_i| = \deg x_i = a_i > 0$. 
We call an element 
$X := x_{i_1}x_{i_2}\dots x_{i_n}$ of $R$ a {\it monomial} of
$R$. Similarly, we call an element $cX$ of $R$ a {\it term} of $R$ where
$c \in K$ and $X$ is a monomial of $R$.
\begin{definition}
 Let $<$ be a well-ordering on the set of the monomials of
 $R$. $<$ is a {\it monomial ordering} of $R$ if the following conditions are
 satisfied.
   \begin{enumerate}
    \item If $U,V,X,Y$ are monomials with $X < Y$, then $UXV < UYV$.
    \item For monomials $X$ and $Y$, if $X = UYV$ for some monomials
	  $U,V$ with $U \neq 1$ or $V \neq 1$, 
	  then $Y < X$. (Hence $1 < X$ for all monomial 
	  $X$ with $X \neq 1$.)
   \end{enumerate}
\end{definition}
\begin{remark0}
 If $R$ is a commutative ring $K[x_1,\dots,x_n]$, then any ordering on
 the set of monomial which satisfies above two condition is a
 well-ordering. On the contrary, if $R$ is a noncommutative ring,
 being a well-ordering does not follow from the above two condition.
\end{remark0}
\begin{example}
\label{ex:lex-order}
We define a monomial ordering on $R$ as
 follows.
Let $X = x_{a_1}\cdots x_{a_k}$ and $Y = x_{b_1}\cdots x_{b_l}$ 
be monomials of $R$.
Then, $X \geq Y$ if $k > l$ or,
$k = l$ and the {\it left-most} nonzero entry of $(a_1 - b_1,\dots,a_k - b_k)$
is positive.
\end{example}
In this section, we fix a monomial ordering $<$ of $R$.
For a non-zero polynomial $f \in R$, we may write f as a linear
combination of monomials of $R$. That is,
\[
 f = c_1X_1 + \dots + c_mX_m
\]
where $c_i \in K \setminus \{0\}$, 
$X_i$ is a monomial satisfying  $X_1 > X_2 > \dots > X_m $. 
We define:
\begin{itemize}
 \item $\lm(f) = X_1$, the {\itshape leading monomial} of $f$; 
 \item $\lc(f) = a_1$, the {\itshape leading coefficient}  of $f$; 
 \item $\lt(f) = a_1X_1$, the {\itshape leading term} of $f$.
\end{itemize}
We also define $\lp(0) = \lc(0) = \lt(0) = 0$.

\begin{definition}
 Let $X$ and $Y$ be monomials of $R$. $X$ is
 divisible by $Y$ if there exists monomials $U,V$ of $R$ such that $X = UYV$.
\end{definition}

Let $(f_1,\dots,f_n)$ be ordered $n$-tuple of $R$. We study the 
{\it division} of $f \in R$ by $(f_1,\dots,f_n)$. First we consider the
special case of the division of $f$ by $g$, where $f,g \in R$.
\begin{definition}
 Given $f,g,h$ in $R$  with $g \neq 0$, we says that {\it $f$ reduces to
 $h$ modulo $g$ in one step}, written
\[
 f \xrightarrow{g} h,
\]
if $\lm(g)$ divides a non-zero monomial $X_i$ that appears in f and 
\[
 h = f - \frac{c_iX_i}{\lt(g)} g.
\]
\end{definition}
\begin{example}
Set  $f = x_1x_2x_3x_1 + x_1x_2, g = x_1x_2 + x_2\in R$. Also, let the
 order be that defined in Example \ref{ex:lex-order}. Then
\begin{eqnarray*}
 f \xrightarrow{g} -x_2x_3x_1 + x_1x_2 \xrightarrow{g} -x_2x_3x_1 - x_2.
\end{eqnarray*}
\end{example}
We extend the process defined above to more general setting.
\begin{definition}
 Let $f,h$
 be polynomials in $R$
 and $F = \{f_\lambda\}_{\lambda \in \Lambda}$ be a family of non-zero
 polynomials. We say that
 {\it $f$ reduces to $h$
 modulo $F$, denoted}
\[
 f \xrightarrow{F}_+ h,
\]
if there exists a sequences of indices 
$\lambda_1,\lambda_2,\dots, \lambda_t \in \Lambda$ 
and a sequences of polynomials
 $h_1, \dots, h_t \in R$ such that
\begin{eqnarray*}
 f \xrightarrow{f_{\lambda_1}} h_1 \xrightarrow{f_{\lambda_2}} h_2
  \xrightarrow{f_{\lambda_3}} \cdots \xrightarrow{f_{\lambda_{t-1}}} h_{t-1}
  \xrightarrow{f_{\lambda_t}} h_t = h.
\end{eqnarray*}
\end{definition}
\begin{definition}
 A polynomial $r \in R$ is called reduced with respect to 
$F=\{f_\lambda\}_{\lambda \in \Lambda}$ if $r=0$ or no monomial that
 appears in $r$ is divisible by any one of the $\lm(f_\lambda),\lambda
 \in \Lambda$. If $f\xrightarrow{F}_+ r$ and $r$ is reduced with respect
 to $F$, then we call $r$ a remainder for $f$ with respect to $F$.
\end{definition}
\begin{proposition} 
\label{prop:division}
Let $f$ be a polynomial in $R$ and Let 
$F = \{f_\lambda\}_{\lambda \in \Lambda}$ be a family of non-zero
 polynomials in $R$. Then $f$ can be written as 
\[
 f = \sum_{i=1}^{t}c_ip_if_{\lambda_i}q_i +r
\]
where $c_i \in K, p_i,q_i,r \in R, i=1,\dots, t$ such that $r$ is
 reduced with respect to $F$ and 
\[
 \lm(f) = \max\left\{\max_{1\leq i \leq t}\lm(p_if_{\lambda_i}q_i),
 \lm(r)
 \right\}.
\]
\end{proposition}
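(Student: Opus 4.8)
The plan is to mimic the commutative division algorithm from \cite{MR1287608}, adapted to the noncommutative setting, proceeding by a well-founded induction on the leading monomial $\lm(f)$ with respect to the monomial ordering $<$, which is a well-ordering by hypothesis. First I would set up the inductive step: given a nonzero $f$, ask whether $\lm(f)$ is divisible by some $\lm(f_\lambda)$. If it is, pick such a $\lambda$ and write $\lm(f) = U\,\lm(f_\lambda)\,V$; then perform a one-step reduction $f \xrightarrow{f_\lambda} f'$ where $f' = f - \frac{\lc(f)}{\lc(f_\lambda)}\,U f_\lambda V$. The key point, which uses condition (1) of the monomial ordering, is that this reduction cancels the leading term of $f$ and introduces only monomials strictly smaller than $\lm(f)$: indeed every monomial of $U f_\lambda V$ other than $\lm(f)$ has the form $U W V$ with $W < \lm(f_\lambda)$ a monomial of $f_\lambda$, hence $U W V < U\,\lm(f_\lambda)\,V = \lm(f)$. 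So $\lm(f') < \lm(f)$, and by the induction hypothesis $f'$ has a decomposition of the required form; prepending the term $\frac{\lc(f)}{\lc(f_\lambda)} U f_\lambda V$ (which contributes a summand $c_1 p_1 f_{\lambda_1} q_1$ with $p_1 = U$, $q_1 = V$) yields a decomposition of $f$. If instead $\lm(f)$ is not divisible by any $\lm(f_\lambda)$, I would split off the leading term: write $f = \lt(f) + (f - \lt(f))$, apply the induction hypothesis to $f - \lt(f)$ (whose leading monomial is strictly smaller), and note that the ``non-reducible'' term $\lt(f)$ must be absorbed into the remainder $r$.

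The bookkeeping I need to track through the induction is the leading-monomial bound
\[
 \lm(f) = \max\left\{\max_{1\leq i\leq t}\lm(p_i f_{\lambda_i} q_i),\ \lm(r)\right\}.
\]
In the divisible case, the new summand $U f_\lambda V$ has $\lm(U f_\lambda V) = U\,\lm(f_\lambda)\,V = \lm(f)$ exactly (again by condition (1), $U\,\lm(f_\lambda)\,V$ dominates all other monomials of $U f_\lambda V$), while by induction all the summands coming from $f'$ and the remainder have leading monomial $\le \lm(f') < \lm(f)$; so the max is achieved and equals $\lm(f)$. In the non-divisible case, $\lt(f)$ goes into $r$, so $\lm(r) = \lm(f)$ while the remaining summands (from $f - \lt(f)$) have strictly smaller leading monomials, and again the identity holds. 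One should also check the base case: if $f = 0$ or $\lm(f)$ is the minimal monomial $1$ (or more simply, when no reduction applies and $f - \lt(f) = 0$), the decomposition is trivial with $t = 0$ and $r = f$.

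The main subtlety — not a deep obstacle, but the place where noncommutativity genuinely enters — is making sure the induction actually terminates and that $r$ ends up reduced with respect to all of $F$, not just the $f_\lambda$ used. Termination is fine because $<$ is a well-ordering (this is exactly where the Remark's warning matters: we are \emph{assuming} a monomial ordering, so well-foundedness is available). For reducedness: after finitely many steps we reach a polynomial $h$ such that, processing its monomials in decreasing order, each monomial is either pushed into $r$ (when not divisible by any $\lm(f_\lambda)$) or triggers a further reduction; since each reduction strictly decreases the leading monomial and the order is well-founded, the process halts with a remainder $r$ none of whose monomials is divisible by any $\lm(f_\lambda)$, i.e.\ $r$ is reduced with respect to $F$. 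I would phrase the argument so that at each stage we only ever work with the current leading term, which keeps the induction clean and the noncommutative placement of the cofactors $p_i = U$, $q_i = V$ transparent. I expect the write-up to be essentially a careful transcription of the commutative proof with the single structural change that cofactors appear on both sides of $f_{\lambda_i}$.
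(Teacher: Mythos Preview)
Your argument is correct and is exactly the standard division-algorithm proof, carried over to the two-sided noncommutative setting with the obvious modification that cofactors appear on both sides. The paper itself does not supply a proof of this proposition: it is stated and immediately used, with the surrounding discussion (and later the analogous module version, Proposition~\ref{prop:division_grobner_module}) referring the reader to the commutative treatment in \cite{MR1287608}. Your write-up is therefore precisely the ``careful transcription of the commutative proof'' that the paper implicitly invokes, and there is nothing to compare beyond that.
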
 
Unfortunately, this decomposition depends on the choice of the order of
$F=\{f_\lambda\}_{\lambda\in\Lambda}$,
and $f\in \langle f_\lambda | \lambda \in \Lambda \rangle$
does not imply $f\xrightarrow{F}_+ 0$. Here $\langle f_\lambda | \lambda
\in \Lambda \rangle$ denotes the two-side ideal generated by
$F=\{f_\lambda\}_{\lambda\in\Lambda}$.
We can overcome this difficulty of remainders 
by choosing a Gr\"obner basis defined as:
\begin{definition}
 A family of non-zero polynomials 
$G=\{g_\lambda\}_{\lambda \in \Lambda}$ contained in a
 two-side ideal $I$ of $R$, is a called {\it Gr\"obner basis} for $I$ if
 for all $f \in I \setminus \{0\}$, there exists 
$\lambda \in \Lambda$ satisfying that $\lp(g_\lambda)$ divides $\lp(f)$.
\end{definition}
\begin{remark0}
 Since $R$ is not Noetherian ring, Gr\"obner basis is not necessarily
 a finite set.
\end{remark0}
\begin{theorem}
\label{th:equivalent_conditions}
 Let $I$ be a non-zero two-side ideal of $R$. The following statements
 are equivalent for a set of non-zero polynomials
 $G=\{g_\lambda\}_{\lambda \in \Lambda} \subset I$.
\begin{enumerate}
 \item \label{item:Grobner}
       $G$ is a Gr\"obner basis for $I$.
 \item \label{item:remainder}
       $f \in I$ if and only if $f \xrightarrow{G}_+ 0$.
 \item \label{item:division}
       $f \in I$ if and only if $f = \sum_{i = 1}^{t}u_ig_{\lambda_i}v_i$
       with 
       $\lp(f) = 
       \max_{1 \leq i \leq t}(\lp(u_i)\lp(g_{\lambda_i})\lp(v_i))$ where
       $u_i,v_i \in R$.
 \item \label{item:basis}
       A basis of the space $R/I$ consists
       of the coset of all the monomials $X$ in $R$ which is reduced
       with respect to $G$.
\end{enumerate} 
\end{theorem}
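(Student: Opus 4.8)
The plan is to establish the equivalence of the four statements by proving the cycle $(\ref{item:Grobner})\Rightarrow(\ref{item:remainder})\Rightarrow(\ref{item:division})\Rightarrow(\ref{item:Grobner})$ together with the separate equivalence $(\ref{item:Grobner})\Leftrightarrow(\ref{item:basis})$. Before starting I would isolate two elementary facts that get used throughout. The first is that a single reduction step $f\xrightarrow{g}h$ has the form $h=f-c\,UgV$ for monomials $U,V$ and $c\in K$, where $X_i:=U\lm(g)V$ is the monomial of $f$ that is cancelled; axiom~(1) of a monomial ordering forces $U\lm(g)V$ to strictly dominate every other monomial of $UgV$, so $\lm(UgV)=X_i$, and since $g\in I$ with $I$ two-sided we get $f-h\in I$. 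The second is that the reduction procedure always terminates because $<$ is a well-ordering: by (the proof of) Proposition~\ref{prop:division}, every $f$ has a reduced remainder $r$ with $f\xrightarrow{G}_+r$, $f-r=\sum_i c_ip_ig_{\lambda_i}q_i$, and each $\lm(p_ig_{\lambda_i}q_i)\le\lm(f)$. Everything below is an assembly of these two facts.

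For $(\ref{item:Grobner})\Rightarrow(\ref{item:remainder})$: if $f\xrightarrow{G}_+0$ then $f$ is a finite sum of elements $cUg_\lambda V\in I$, so $f\in I$; conversely, if $f\in I$, reduce $f$ to a reduced remainder $r\in I$, and observe that $r\ne0$ would let the Gr\"obner basis property produce some $\lm(g_\lambda)$ dividing $\lm(r)$, contradicting that $r$ is reduced, so $r=0$. For $(\ref{item:remainder})\Rightarrow(\ref{item:division})$: the substantive direction takes $f\in I$, hence a reduction chain $f=h_0,h_1,\dots,h_t=0$; writing $h_{j-1}-h_j=c_jU_jg_{\lambda_j}V_j$, whose leading monomial is a monomial of $h_{j-1}$, an induction on $j$ gives $\lm(h_j)\le\lm(f)$ and $\lm(U_j)\lm(g_{\lambda_j})\lm(V_j)\le\lm(f)$ for every $j$; telescoping produces $f=\sum_j c_jU_jg_{\lambda_j}V_j$ with $\max_j\lm(U_j)\lm(g_{\lambda_j})\lm(V_j)\le\lm(f)$, and the reverse inequality holds for any such representation, so equality follows (the other half of~(\ref{item:division}) is immediate since each $g_{\lambda_i}\in I$). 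For $(\ref{item:division})\Rightarrow(\ref{item:Grobner})$: given $f\in I\setminus\{0\}$, choose the index $i$ attaining the maximum in the representation guaranteed by~(\ref{item:division}); then $u_i,v_i\ne0$ and $\lm(f)=\lm(u_i)\lm(g_{\lambda_i})\lm(v_i)$, so $\lm(g_{\lambda_i})$ divides $\lm(f)$, i.e.\ $G$ is a Gr\"obner basis.

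For the equivalence with~(\ref{item:basis}): assuming~(\ref{item:Grobner}), the cosets of the reduced monomials span $R/I$ because $\bar f=\bar r$ for the reduced remainder $r$ of $f$, and they are $K$-linearly independent because a nontrivial relation among them would give a nonzero $f\in I$ whose leading monomial is itself a reduced monomial, contradicting that some $\lm(g_\lambda)$ must divide $\lm(f)$. Conversely, assuming~(\ref{item:basis}), suppose for contradiction that some $f\in I\setminus\{0\}$ had $\lm(f)$ divisible by no $\lm(g_\lambda)$; then every reduction step cancels a monomial strictly below the current leading monomial and introduces only smaller monomials, so the leading term is never affected and the reduced remainder $r$ of $f$ satisfies $\lm(r)=\lm(f)$, whence $r\ne0$; but $r\in I$ is a $K$-linear combination of reduced monomials, contradicting their independence in $R/I$. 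Hence $\lm(f)$ is divisible by some $\lm(g_\lambda)$, which is~(\ref{item:Grobner}).

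The one point that is not mere bookkeeping is the termination of the reduction process in the non-Noetherian ring $R$ --- this is what legitimises speaking of a reduced remainder in every step above --- and it is exactly here that the well-ordering hypothesis is essential; since this is the content of Proposition~\ref{prop:division}, I would simply invoke it. Granting termination, the remaining delicate step is the leading-monomial tracking in $(\ref{item:remainder})\Rightarrow(\ref{item:division})$, where one must certify that each auxiliary product $\lm(U_j)\lm(g_{\lambda_j})\lm(V_j)$ stays $\le\lm(f)$; the rest is formal.
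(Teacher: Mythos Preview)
Your proof is correct. The paper's own argument is almost entirely by citation: it proves only $(\ref{item:basis})\Rightarrow(\ref{item:remainder})$ explicitly and refers the remaining implications to the commutative treatment in Adams--Loustaunau \cite[Theorem~1.6.2 and Proposition~2.1.6]{MR1287608}. The single implication the paper does spell out uses the same core idea as your $(\ref{item:basis})\Rightarrow(\ref{item:Grobner})$ --- reduce $f$ to a remainder $r$, observe that $\bar f=\bar r$ in $R/I$, and invoke the linear independence of the reduced monomials --- but draws the conclusion $f\in I\Leftrightarrow r=0$ rather than producing a divisor of $\lm(f)$ directly.

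Your organisation as the cycle $(\ref{item:Grobner})\Rightarrow(\ref{item:remainder})\Rightarrow(\ref{item:division})\Rightarrow(\ref{item:Grobner})$ together with $(\ref{item:Grobner})\Leftrightarrow(\ref{item:basis})$ is more self-contained and has the virtue of making the leading-monomial bookkeeping in $(\ref{item:remainder})\Rightarrow(\ref{item:division})$ explicit in the noncommutative setting, rather than asking the reader to check that the commutative argument ports over. The paper's approach buys brevity; yours buys transparency, and in particular your identification of termination (Proposition~\ref{prop:division}, resting on the well-ordering) as the one genuinely nontrivial ingredient is exactly right.
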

\begin{proof}
$(\ref{item:basis} \Rightarrow \ref{item:remainder})$
 Set $f \in R$. By Proposition~\ref{prop:division}, 
 there exists a reduced polynomial $r\in R$ 
 such that $f \xrightarrow{G}_+ r$. Then $r$ is a linear combination of
 reduced monomials with respect to $R$.
 Since $f$ and $r$ represent the same element in $R/I$,
 $f$ is zero in $R/I$ if and only if $r = 0$ in $R$.

For the remaining part of the proof, 
 see \cite[Theorem 1.6.2 and Proposition 2.1.6]{MR1287608}.
\end{proof}
Let $G=\{g_\lambda\}_{\lambda\in\Lambda}$ be a Gr\"obner basis.
For any $f\in R$, let $r$ be a reduced polynomial in $R$ such that 
$f \xrightarrow{G}_+ r$. Theorem~\ref{th:equivalent_conditions} implies
that $r$ is unique. 
We call $r$ the normal form of $f$ with respect to $G$, denoted $N_G(f)$.
\subsection{S-polynomials}
The key to compute a Gr\"obner basis is the S-polynomial. In the
contrast to the commutative case, the S-polynomial of noncommutative
polynomials $f$ and $g$ in $R$ is not unique.
\begin{definition}
 Let $f,g$ be polynomials in $R$. Set $\lm(f) = x_{i_1}\dots x_{i_n}$
 and $\lm(g) = x_{j_1}\dots x_{j_m}$. We define the coefficient set of
 the S-polynomial of $f$ and $g$, denoted $C(f,g)$ by
\begin{eqnarray*}
 C(f,g) &=& \left\{(x_{a_1}\dots x_{a_\alpha},1,x_{c_1}\dots x_{c_\gamma})\in
  R^3\left| 
\begin{array}{c}
  (a_1,\dots,a_\alpha) = (j_1,\dots,j_\alpha), \\
  (i_1,\dots,i_{m-\alpha}) = (j_{\alpha+1},\dots,j_m)\\
  (i_{m-\alpha+1},\dots,i_n) = (c_1,\dots,c_\gamma)
\end{array}  
     \right.
    \right\}\\
&\cup&
 \left\{(1,x_{b_1}\dots x_{b_\beta},x_{c_1}\dots x_{c_\gamma})\in
  R^3\left| 
\begin{array}{c}
  (i_1,\dots,i_\beta) = (b_1,\dots,b_\beta), \\
  (i_{\beta+1},\dots,i_{\beta+m}) = (j_1,\dots,j_m)\\
  (i_{\beta+m+1},\dots,i_n) = (c_1,\dots,c_\gamma)
\end{array}  
     \right.
 \right\}.
\end{eqnarray*}

\end{definition}
Let $f,g$ be polynomials and $(z,p,q) \in C(f,g)$.
We define the {\it S-polynomial} of
$f$ and $g$ associated with $(z,p,q)$, denoted $S(f,g;z,p,q)$, by
\[
 S(f,g;z,p,q) = zf - pgq.
\]
\begin{theorem}[Buchberger's theorem for noncommutative algebra]
\label{th:Buchberger}
 Let $G = \{g_\lambda\}_{\lambda\in \Lambda}$ be a family of non-zero
 polynomials in $R$. Then $G$ is a Gr\"obner basis for the ideal 
$I =\langle g_\lambda | \lambda \in \Lambda \rangle$
 if and only if for all
 $\lambda, \gamma \in \Lambda$, and for all 
$(z,p,q) \in C(g_\lambda,g_\gamma)$,
\[
 S(g_\lambda, g_{\gamma};z,p,q) \xrightarrow{G}_+ 0.
\]
\end{theorem}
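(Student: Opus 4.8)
The plan is to prove the two implications separately. The ``only if'' direction is immediate: if $G$ is a Gr\"obner basis for $I$, then for every $\lambda,\gamma\in\Lambda$ and every $(z,p,q)\in C(g_\lambda,g_\gamma)$ the element $S(g_\lambda,g_\gamma;z,p,q)=zg_\lambda-pg_\gamma q$ lies in the two-sided ideal $I$ (as $g_\lambda,g_\gamma\in I$), so the equivalence $(\ref{item:Grobner})\Leftrightarrow(\ref{item:remainder})$ of Theorem~\ref{th:equivalent_conditions} gives $S(g_\lambda,g_\gamma;z,p,q)\xrightarrow{G}_+ 0$. The converse is the substantive part, and I would prove it by a descent argument modelled on the commutative Buchberger proof of \cite{MR1287608}, adapted so as to keep track of the noncommutative overlap patterns.

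So assume every S-polynomial reduces to $0$, and let $0\neq f\in I$; the goal is to find $\lambda$ with $\lm(g_\lambda)$ dividing $\lm(f)$. Starting from any expression of $f$ as an $R$-combination of the $g_\lambda$ and expanding the two-sided coefficients into terms, I get a representation $f=\sum_{k=1}^t c_k\,a_k\,g_{\mu_k}\,b_k$ with $c_k\in K\setminus\{0\}$ and $a_k,b_k$ monomials of $R$. To each such representation I attach its \emph{height} $\delta=\max_k\lm(a_k)\lm(g_{\mu_k})\lm(b_k)$ and the number $|T|$ of indices $k$ attaining it; using that $<$ is a \emph{well-ordering}, I fix a representation of $f$ for which the pair $(\delta,|T|)$ is lexicographically minimal. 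Because leading monomials multiply in the free algebra, $\delta\ge\lm(f)$, with equality exactly when some $\lm(g_{\mu_k})$ divides $\lm(f)$ --- in which case we are done. So suppose $\delta>\lm(f)$; then $f$ has no monomial equal to $\delta$, which forces the leading contributions of the top-height summands to cancel, i.e.\ $\sum_{k\in T}c_k\lc(g_{\mu_k})=0$. I will derive a contradiction from this.

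The engine of the descent is a local rewriting move. Given $k,k'\in T$, I compare how the distinguished occurrences of $\lm(g_{\mu_k})$ and $\lm(g_{\mu_{k'}})$ sit inside the common word $\delta$: they overlap properly, or one lies inside the other, or they are disjoint. In the first two cases the overlap is, by the construction of $C$, one of the triples in $C(g_{\mu_k},g_{\mu_{k'}})$ (in one order or the other); sandwiching the corresponding S-polynomial between suitable monomials expresses $c_ka_kg_{\mu_k}b_k+c_{k'}a_{k'}g_{\mu_{k'}}b_{k'}$ as a single top-height term (with combined coefficient) plus a monomial multiple of that S-polynomial, and since the S-polynomial reduces to $0$, Proposition~\ref{prop:division} rewrites the multiple as an $R$-combination of the $g_\lambda$ with every summand of height strictly below $\delta$. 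In the disjoint case I merge the two summands directly, using that the two $g$'s inserted side by side into $\delta$ still have leading monomial $\delta$, so the two top-height summands coincide modulo terms of height $<\delta$. In each case $|T|$ strictly decreases while the preserved sum $\sum_{k\in T}c_k\lc(g_{\mu_k})$ stays $0$; iterating drives $|T|$ to $0$, whereupon the representation's height has dropped below $\delta$ --- contradicting minimality --- or to $1$, whereupon the single surviving term would have nonzero leading contribution equal to the vanishing sum, again absurd. Hence $\delta=\lm(f)$ and $G$ is a Gr\"obner basis.

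The main obstacle is precisely this rewriting move in the free noncommutative algebra: a leading monomial can embed into the word $\delta$ in several distinct, mutually overlapping ways, so the trichotomy (proper overlap / containment / disjoint) must be laid out with care, the first two cases must be matched faithfully to the triples comprising $C$, the disjoint case needs its own argument, and --- above all --- leading monomials must be tracked precisely enough that every move strictly decreases $(\delta,|T|)$ in the well-order. This is where the well-ordering hypothesis on $<$, which does not follow from the two monomial-ordering axioms in the noncommutative case, is indispensable for termination. A further delicate point is aligning the scalar bookkeeping of the merges with $\sum_{k\in T}c_k\lc(g_{\mu_k})=0$; and one should note in passing that the S-polynomials may need to be normalized by leading coefficients for the top terms to cancel as claimed.
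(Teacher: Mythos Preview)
Your argument is correct and follows the same descent scheme as the paper: choose a representation of $f$ with minimal ``height'' $\delta=\max_k \lm(a_k)\lm(g_{\mu_k})\lm(b_k)$, observe that the top contributions must cancel if $\delta>\lm(f)$, and then rewrite to lower the height. The organizational difference is in how the top cancellation is processed. The paper collects all top summands at once, applies Lemma~\ref{lem:linear_combination} to write their sum as a $K$--linear combination of the trivial S-polynomials $S(X_ig_{\lambda_i}Y_i,X_jg_{\lambda_j}Y_j;1,1,1)$, and then asserts that each of these equals $U_{ij}\,S(g_{\lambda_i},g_{\lambda_j};z,p,q)\,V_{ij}$ for some $(z,p,q)\in C(g_{\lambda_i},g_{\lambda_j})$. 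You instead merge top summands pairwise and explicitly distinguish the three geometric situations (proper overlap, containment, disjoint occurrences inside $\delta$), and you minimize the pair $(\delta,|T|)$ rather than $\delta$ alone.

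What your organization buys is that the disjoint case is handled on its own terms: as you note, when the two leading monomials sit disjointly inside $\delta$ there is no triple in $C(g_{\lambda_i},g_{\lambda_j})$ to appeal to, and one must instead use the add--and--subtract trick with $a\,g_{\mu_k}\,b\,g_{\mu_{k'}}\,c$ to express the difference as $G$--terms of strictly smaller height. The paper's single-line claim $X_ig_{\lambda_i}Y_i-X_jg_{\lambda_j}Y_j=U_{ij}S(g_{\lambda_i},g_{\lambda_j};z,p,q)V_{ij}$ glosses over precisely this situation, so your version is the more complete one here. Two small remarks: once you have minimized $(\delta,|T|)$ lexicographically, a single merge already contradicts minimality, so the ``iterate until $|T|=0$'' phrasing is unnecessary; and your caveat about normalizing by leading coefficients is apt, since the paper's $S(f,g;z,p,q)=zf-pgq$ only cancels the top term when $\lc(f)=\lc(g)$.
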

In \cite{MR1947291}, there are no explicit description of the proof of
Buchberger's theorem for noncommutative algebra. We give the proof
according to the proof for commutative case by \cite{MR1287608}.
Before we can prove this result, we need one preliminary lemma.
\begin{lemma}
\label{lem:linear_combination}
Let $f_1,\dots,f_s \in R$ be polynomials such that $\lp(f_i) = X \neq 0$
 for all $i = 1,\dots, s$. Let $f = \sum_{i=1}^{s}c_if_i$ with $c_i \in
 K$, $i=1,\dots, s$. If $\lp(f) < X$, then $f$ is a linear combination
 with coefficients in $K$, of $S(f_i,f_j;1,1,1)$, $1\leq i,j \leq s$.
\end{lemma}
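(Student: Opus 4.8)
The plan is to mimic the ``cancellation lemma'' from the commutative Buchberger theory (the step in \cite{MR1287608} that reduces Buchberger's criterion to a statement about leading terms); the only place the noncommutative machinery is really needed is the identification of which S-polynomials occur. So I would begin there: since $\lp(f_i) = \lp(f_j) = X$ for all $i,j$, the two leading monomials coincide, and taking $\beta = 0$ in the second family defining $C(f_i,f_j)$ — the ``total overlap'', in which all of $\lp(f_j)$ sits over the front of $\lp(f_i)$ with nothing protruding on either side — shows that $(1,1,1) \in C(f_i,f_j)$. Hence $S(f_i,f_j;1,1,1) = 1\cdot f_i - 1\cdot f_j\cdot 1 = f_i - f_j$ (in the leading-coefficient-normalized convention, $\lc(f_i)^{-1}f_i - \lc(f_j)^{-1}f_j$; the two agree whenever the $f_i$ are monic, e.g.\ over $\F_2$). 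Thus the lemma reduces to the purely linear-algebraic assertion that $f$ lies in the $K$-span of these differences.

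Next comes the telescoping. Put $a_i := \lc(f_i) \in K \setminus \{0\}$ and $\hat f_i := a_i^{-1} f_i$, so that each $\hat f_i$ is monic with $\lp(\hat f_i) = X$. Because $\lp(f_i) = X$ for every $i$, the coefficient of the monomial $X$ in $f = \sum_{i=1}^s c_i f_i = \sum_{i=1}^s (c_i a_i)\hat f_i$ equals $\sum_{i=1}^s c_i a_i$, and the hypothesis $\lp(f) < X$ forces the scalar relation $\sum_{i=1}^s c_i a_i = 0$. Writing $b_i := c_i a_i$ and $e_k := b_1 + \cdots + b_k$ (so that $e_0 = e_s = 0$), an Abel rearrangement gives
\[
 f \;=\; \sum_{i=1}^s b_i\,\hat f_i \;=\; \sum_{i=1}^s (e_i - e_{i-1})\,\hat f_i
   \;=\; \sum_{i=1}^{s-1} e_i\,(\hat f_i - \hat f_{i+1}),
\]
where the last equality uses $e_0 = e_s = 0$. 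Since $\hat f_i - \hat f_{i+1}$ is precisely the (normalized) S-polynomial $S(f_i,f_{i+1};1,1,1)$ identified in the first step, this displays $f$ as a $K$-linear combination of S-polynomials of the stated kind.

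I do not anticipate any real difficulty. The telescoping is purely formal, and the scalar identity $\sum_i c_i a_i = 0$ drops straight out of the definition of $\lp$. The one point warranting care — and the only point where the noncommutative framework intervenes — is the verification that $(1,1,1) \in C(f_i,f_j)$ directly from the definition of the coefficient set $C(\cdot,\cdot)$; once that is in hand the argument is verbatim the commutative one, and in particular it requires neither that the $f_i$ be distinct nor that the resulting list of S-polynomials be irredundant.
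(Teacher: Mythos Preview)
Your argument is correct and is exactly the approach the paper intends: the paper's own proof is just a pointer to \cite[Lemma~1.7.5]{MR1287608}, whose content is precisely the Abel-summation/telescoping you carry out, together with the observation (which you make explicit) that $(1,1,1)\in C(f_i,f_j)$ whenever $\lp(f_i)=\lp(f_j)$. Your remark about the leading-coefficient normalization is a fair caveat; the paper's formulation silently treats the $f_i$ as monic in the same way the commutative source does.
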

\begin{proof}
 See the proof of \cite[Lemma 1.7.5]{MR1287608}.
\end{proof}
\begin{proof}[Proof of Theorem \ref{th:Buchberger}]
If $G  = \{g_\lambda\}_{\lambda\in \Lambda}$ is a Gr\"obner basis of 
$I = \langle g_\lambda | \lambda \in \Lambda \rangle$, then 
$ S(g_\lambda, g_{\gamma};z,p,q) \xrightarrow{G}_+ 0$ by Theorem
 \ref{th:equivalent_conditions}, since 
$S(g_\lambda, g_{\gamma};z,p,q) \in I$.

Conversely, let us assume that 
$S(g_\lambda, g_{\gamma};z,p,q) \xrightarrow{G}_+ 0$ for all 
$\lambda \neq \gamma \in \Lambda, (z,p,q) \in C(g_\lambda, g_\gamma)$. 
We will use Theorem~\ref{th:equivalent_conditions}, \ref{item:division}
 to show
 that $G$ is a Gr\"obner basis for $I$. Set $f\in I$. Then $f$ can be
 written in many ways as linear combination of the $g_\lambda$'s. We
 choose to write $f = \sum_{i=1}^{t}u_{i}g_{\lambda_i}v_{i}$,
 $u_i, v_i \in R$, with
\[
 X = \max_{1\leq i \leq t}(\lp(u_i)\lp(g_{\lambda_i})\lp(v_i))
\]
least.
If $X = \lp(f)$, we are done. Otherwise, $\lp(f) < X$. We will find the
 representation of $f$ with a smaller $X$. 
Let $S = \{i|\lp(u_i)\lp(g_{\lambda_i})\lp(v_i) = X\}$. For $i
 \in S$, write $u_i = c_iX_i + \text{lower terms}$ and 
 $v_i = Y_i + \text{lower terms}$.
 Set $g = \sum_{i\in S}c_iX_ig_iY_i$. Then,
 $\lp(X_ig_{\lambda_i}Y_i) = X$, for all $i \in S$, but 
 $\lp(g) < X$. By lemma~\ref{lem:linear_combination}, there exists
 $d_{i,j} \in K$ such that
\[
 g = \sum_{i,j \in S, i\neq j}
      d_{i,j}S(X_ig_{\lambda_i}Y_i,X_jg_{\lambda_j}Y_j;1,1,1).
\]
Now, by the definition of $S$, for $i,j\in S$ we have
\begin{eqnarray*}
 X &=&
       X_ig_{\lambda_i}Y_i\\
   &=&
       X_jg_{\lambda_j}Y_j\\
   &=&
    \lcm(X_ig_{\lambda_i}Y_i,X_jg_{\lambda_j}Y_j),
\end{eqnarray*}
so it follows that,
\begin{eqnarray*}
 S(X_ig_{\lambda_i}Y_i,X_jg_{\lambda_j}Y_j;1,1,1) &=&
       X_ig_{\lambda_i}Y_i
     -
       X_jg_{\lambda_j}Y_j\\
&=& U_{ij}S(g_{\lambda_i},g_{\lambda_j};z,p,q)V_{ij}
\end{eqnarray*}
for some monomials $U_{ij},V_{ij}$ in $R$ and
 $(z,p,q)\in C(g_{\lambda_i},g_{\lambda_j})$.
By the hypothesis, $S(g_{\lambda_i},g_{\lambda_j};z,p,q) \xrightarrow{G}_+ 0$,
 thus 
$S(X_ig_{\lambda_i}Y_i,X_jg_{\lambda_j}Y_j;1,1,1) \xrightarrow{G}_+ 0$. 
This gives a presentation
\[
 S(X_ig_{\lambda_i}Y_i,X_jg_{\lambda_j}Y_j;X) =
 \sum_{\nu = 1}^{s}u_{ij\nu}g_{\lambda_nu}v_{ij\nu},
\]
such that, by Proposition~\ref{prop:division},
\begin{eqnarray*}
 \max_{1\leq \nu \leq s}(\lp(u_{ij\nu})\lp(g_{\lambda_nu})\lp(v_{ij\nu}))
 &=& \lp(S(X_ig_{\lambda_i}Y_i,X_jg_{\lambda_j}Y_j;1,1,1))\\
 &<& \max(\lp(X_ig_{\lambda_i}Y_i),\lp(X_jg_{\lambda_j}Y_j))) = X.
\end{eqnarray*}
Substituting these expressions into $g$ above, and $g$ into $f$, 
we get a desired contradiction.
\end{proof}

\section{Modules over a noncommutative ring}
\subsection{Gr\"obner basis for modules}
Let $K$ and $R$ be as above.
Let $\e_1,\dots,\e_m$ be standard basis of $R^m$, 
\[
 \e_1 = (1,0,\dots,0), \e_2 = (0,1,\dots,0),\dots,  \e_m = (0,0,\dots,0,1).
\]
 Then, by a {\itshape monomial} in $R^m$ we mean a vector of the type
 $X\e_i$ $(1\leq i \leq m)$, where $X$ is a monomial in $R$.
\begin{example}
 $(0,x_1x_3x_1,0), (0,0,x_2)$ are monomials in $R^3$ but $(x_1,x_2,0)$ is not.
\end{example}
Similarly, by a {\itshape term}, we mean a vector of the type
$c\mathbf{X}$, where $c \in K \setminus \{0\}$ and $\mathbf{X}$ is a
monomial.
\begin{example}
 $(0,5x_1x_1x_3,0,0)$ is a term of $R^4$ but not a monomial. 
\end{example}
If $\vec{X} = cX\e_i$ and $\vec{Y} = dY\e_j$ are terms of
$R^m$, we say that $\mathbf{X}$ divides $\vec{Y}$ provided $i = j$
and there is a monomial $Z$ in $R$ such that $Y = ZX$. We write
\[
 \frac{\vec{Y}}{\vec{X}} = \frac{dY}{cX} = \frac{d}{c}Z.
\]
\begin{example}
 $(0,x_1x_3,0)$ divides $(0,x_1x_1x_3,0)$ but does not divide $(0,x_3,0)$ or
 $(x_2x_1x_3,0,0)$, so we have
\[
 \frac{(0,x_1x_1x_3,0)}{(0,x_1x_3,0)} = \frac{x_1x_1x_3}{x_1x_3} = x_1.
\]
\end{example}
If there exists a monomial $Z \in R$ such that $\X = Z\Y$ or $\Y = Z\X$,
 then we define the least common multiple of $\X$ and $\Y$, denoted 
$\lcm(\X,\Y)$, by
\[
 \lcm(\X,\Y) = \begin{cases}
		Z\Y & \text{ if } \X = Z\Y\\
		Z\X & \text{ if } \Y = Z\X.
	       \end{cases}
\]
Otherwise we define  $\lcm(\X,\Y) = 0$.
 \begin{definition}
  By a term order on the monomials of $R^m$, we mean a well-ordering $<$
  on these monomials satisfying the following conditions:
\begin{enumerate}
 \item If $\mathbf{X} < \mathbf{Y}$ then $Z\mathbf{X} < Z\mathbf{Y}$ for
       all monomials $\mathbf{X},\mathbf{Y}$ and every monomial $Z$ in
       $R$.
 \item $\mathbf{X} < Z\mathbf{X}$ , for every monomial $\mathbf{X}$ in
       $R^m$ and monomial $Z \neq 1$ in $R$. 
\end{enumerate}
 \end{definition}
We define an order on $R^m$ using an order of $R$.
\begin{definition}[POT (position over term)]
 For monomials 	$\mathbf{X}=X\e_i, \mathbf{Y}=Y\e_j \in R^m$, where $X,Y$ are
 monomials in $R$, we say that
   \begin{eqnarray*}
  \mathbf{X} < \mathbf{Y} \text{ if and only if }
   \begin{cases}
     &i > j \\
     &\text{ or }\\ 
     &i=j \text{ and } X < Y.
   \end{cases}
 \end{eqnarray*}
\end{definition}
We now adopt some notation. We first fix a term order $<$ on the
monomials of $R^m$. Then for all $\mathbf{f}\in R^m$, with
$\mathbf{f}\neq 0$, we may
write
\[
 \mathbf{f} = a_1\mathbf{X}_1 + a_2\mathbf{X}_2 + \dots +
 a_i\mathbf{X}_i+ \dots + a_r\mathbf{X}_r,
\]
where, for $1\leq i \leq r$, $a_i \in K \setminus \{0\}$ and
$\mathbf{X}_i$ is a monomial in $R^m$ satisfying $\mathbf{X}_1 >
\mathbf{X}_2 > \cdots > \mathbf{X}_r$. We define
\begin{itemize}
 \item $\lm(\mathbf{f}) = \mathbf{X}_1$, the leading monomial of
       $\mathbf{f}$.
 \item $\lc(\mathbf{f}) = a_1$, the leading coefficient of
       $\mathbf{f}$.
 \item $\lt(\mathbf{f}) = a_1\mathbf{X}_1$, the leading term of
       $\mathbf{f}$.
\end{itemize}
We define $\lm(\mathbf{0}) = \lt(\mathbf{0}) = \mathbf{0}$ and 
$\lc(\mathbf{0}) = 0$.
\begin{remark0}
 $\lm,\lc$ and $\lt$ are multiplicative in the following sense:
$\lm(f\g) = \lp(f)\lm(\g)$, $\lc(f\g) 
= \lc(f)\lc(\g)$ and 
$\lt(fg) = \lt(f)\lt(\g)$, for all $f\in R$ and $\g \in R^m$.
\end{remark0}
Similar to the reduction of polynomials, we introduce the reduction of
vectors.
\begin{definition}
 Given vectors $\vec{f},\vec{g},\vec{h}$ in $R^m$ with 
 $\vec{g} \neq 0$, we says that 
 {\it $\vec{f}$ reduces to $\vec{h}$ modulo $\vec{g}$ in one step}, written
\[
 \vec{f} \xrightarrow{\vec{g}} \vec{h},
\]
if $\lt(\vec{g})$ divides a non-zero term $\X_i$ that appears in
 $\vec{f}$ and 
\[
 \vec{h} = \vec{f} - \frac{\X_i}{\lt(\vec{g})} \vec{g}.
\]
\end{definition}
\begin{example}
Set  $\vec{f} = (x_1x_2x_3x_1 + x_1x_2,x_1,0), 
 \vec{g} = (x_1x_2 + x_2,0,x_3)\in R^3$. Also, let the
 order be POT. Then,
\begin{eqnarray*}
 \vec{f} \xrightarrow{\vec{g}} (-x_2x_3x_1 + x_1x_2,x_1,-x_3x_3x_1) 
\xrightarrow{\vec{g}} (x_2x_3x_1 + x_2, x_1, x_3x_3x_1 + x_3).
\end{eqnarray*}
\end{example}
Let $\Omega = \{\omega_\lambda\}_{\lambda\in \Lambda}$ be a subset of
$R$ and $\langle \Omega \rangle$ denote the two-side ideal of $R$ 
generated by $\Omega$. We suppose that $\Omega$ is a Gr\"obner basis.
For a positive integer $k$, set 
$\Omega(k) = \Omega \cup \{g \in R| |g| > k\}$. Then $\Omega(k)$ is also 
a Gr\"obner basis.
We define a map 
$N_{\Omega,k}\colon \bigoplus_{i=1}^mR\e_i 
           \rightarrow \bigoplus_{i=1}^mR\e_i$ 
by 
\[
  N_{\Omega,k}(f_1,\dots,f_s) 
    = (N_{\Omega(k-|\e_1|)}(f_1),\dots, N_{\Omega(k-|\e_m|)}(f_m)).
\]

\begin{definition}
 Let $\vec{f},\vec{h}$ and $\vec{f}_1,\dots,\vec{f}_s$ be vectors in $R^m$ 
 with $\vec{f}_i \neq 0$,
 and set $F = \{\vec{f}_1, \dots, \vec{f}_s\}$. We say that 
 $\vec{f}$ $(\Omega,k)$-{\it reduces to} $\vec{h}$
 modulo $F$, denoted
\[
 \vec{f} \xrightarrow{F}_{\Omega,k} \vec{h},
\]
if there exists a sequences of indices 
$i_1,i_2,\dots, i_t \in \{1,\dots,s\}$ and a sequences of vectors
 $\vec{h}_1, \dots, \vec{h}_t \in R$ such that
\begin{eqnarray*}
 \vec{f} \xrightarrow{\vec{f}_{i_1}} \vec{h}_1 
         \xrightarrow{\vec{f}_{i_2}} \vec{h}_2
         \xrightarrow{\vec{f}_{i_3}} \cdots 
         \xrightarrow{\vec{f}_{i_{t-1}}} \vec{h}_{t-1}
         \xrightarrow{\vec{f}_{i_t}} \vec{h}_t.
\end{eqnarray*}
and $N_{\Omega,k}(\vec{h}_t) = \vec{h}$.
\end{definition}
 A vector $\vec{r} \in R$ is called 
$(\Omega,k)$-{\it reduced with respect to} 
$F=\{\vec{f}_1,\dots,\vec{f}_t\}$ if $\vec{r}=0$ or no monomial that
 appears in $\vec{r}$ is divisible by any one of the 
$\lm(\vec{f}_i), i \in \{1,\dots,t\}$ and $N_{\Omega,k}(\vec{r}) = \vec{r}$.

\subsection{$(\Omega,k)$-Gr\"obner basis}
\begin{definition}
 Let $\vec{f}_1,\dots,\vec{f}_s$ be vectors in
 $\bigoplus_{i=1}^mR\e_i$. A submodule $M$ of $\bigoplus_{i=1}^mR\e_i$
 is $(\Omega,k)$-generated by $F = \{\vec{f}_1,\dots,\vec{f}_s\}$, denoted 
 $M = \langle F \rangle_{\Omega,k} = 
 \langle \vec{f}_1,\dots,\vec{f}_s \rangle_{\Omega,k}$, if 
 \[
  M = \left\{\left.
             \sum_{i=1}^{t}p_i\vec{f}_{i} + \sum_{i=1}^{m}q_i\e_i
      \right|
             p_i \in R, q_i \in \langle \Omega(k-|\e_i|) \rangle
      \right\}.
 \]
\end{definition}
\begin{definition}
  A set of non-zero vectors
 $G=\{\vec{g}_1,\dots,\vec{g}_t\} \subset \bigoplus_{i=1}^mR\e_i$, 
 is a called $(\Omega,k)$-{\it Gr\"obner basis} for 
 $M = \langle \vec{g}_1,\dots,\vec{g}_t \rangle_{\Omega,k}$ if
 for all $\vec{f} \in M$ such that $\vec{f} \neq 0$, 
one of the following two conditions is satisfied.
\begin{enumerate}
 \item  There exists 
 $i \in \{1,\dots, t\}$ satisfying that 
 $\lm(\vec{g}_i)$ divides $\lm(\vec{f})$.
 \item There exists $j \in \{1,\dots,m\}$ and 
       $q\in \langle (\Omega(k-|\e_j|))\rangle$ such that 
       $\lm(q\e_j)$ divides $\lm(\vec{f})$.
\end{enumerate}
Here $\langle \Lt(\Omega(k-|\e_i|))\rangle$ denotes the two side ideal
 generated by $\{\lm(g)\e_i|g \in \Omega(k-|\e_i|)\}$.
\end{definition}

\begin{proposition} 
\label{prop:division_grobner_module}
 Let $\vec{f}$ be a vector in $\bigoplus_{i=1}^mR\e_i$ and Let 
 $G = \{\vec{g}_1\dots,\vec{g}_t\} \subset \bigoplus_{i=1}^mR\e_i$ 
 be a $(\Omega,k)$-Gr\"obner basis.
 Then there exists a unique $\vec{r} \in \bigoplus_{i=1}^mR\e_i$ such that 
 $\vec{r}$ is $(\Omega,k)$-reduced with respect to $G$ and 
 \[
  f = \sum_{i=1}^{t}p_i\vec{f}_{i} + \sum_{i=1}^{m}q_i\e_i  + r 
 \]
  where $p_i\in R, i=1,\dots, t$ and 
$q_i \in \langle \Omega(k-|\e_i|) \rangle, i=1,\dots,m$ with
 \[
  \lm(\vec{f}) = \max\left\{\max_{1\leq i \leq t}\lm(p_i\vec{f}_{i}),\;
  \max_{1\leq i \leq m}\lm(q_i\e_i),\;
  \lm(r)
  \right\}.
 \]
\end{proposition}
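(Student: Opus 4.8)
The plan is to imitate the proof of Proposition~\ref{prop:division}, but carried out in $\bigoplus_{i=1}^m R\e_i$ instead of in $R$ and with the componentwise normal form $N_{\Omega,k}$ interleaved into the reduction steps; this produces the decomposition (existence). Uniqueness is the one genuinely new point, and it is there that the $(\Omega,k)$-Gr\"obner basis hypothesis is used.

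For existence I would run the following procedure starting from $\vec{f}_{(0)} = N_{\Omega,k}(\vec{f})$. At stage $\ell$, if some term $c\vec{X}$ of $\vec{f}_{(\ell)}$ is divisible by $\lt(\vec{g}_i)$ for some $i$, perform one reduction step $\vec{f}_{(\ell)} \xrightarrow{\vec{g}_i} \vec{f}_{(\ell+1)} = \vec{f}_{(\ell)} - \frac{c\vec{X}}{\lt(\vec{g}_i)}\vec{g}_i$ and record $\frac{c\vec{X}}{\lt(\vec{g}_i)}$ as a contribution to $p_i$; otherwise replace $\vec{f}_{(\ell)}$ by $N_{\Omega,k}(\vec{f}_{(\ell)})$, which subtracts from each component $j$ an element of $\langle\Omega(k-|\e_j|)\rangle$, and record those as contributions to the $q_j$. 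By the multiplicativity of $\lm$, the leading monomial of $\frac{c\vec{X}}{\lt(\vec{g}_i)}\vec{g}_i$ is exactly $\vec{X}$, so that term cancels and only strictly smaller monomials are introduced; likewise $N_{\Omega,k}$ replaces each removed monomial of a component by strictly smaller ones. Hence the same well-ordering argument as in Proposition~\ref{prop:division} shows the procedure terminates at a vector $\vec{r}$ no term of which is divisible by any $\lt(\vec{g}_i)$ and with $N_{\Omega,k}(\vec{r}) = \vec{r}$, i.e.\ $\vec{r}$ is $(\Omega,k)$-reduced with respect to $G$. Collecting the recorded contributions gives $\vec{f} = \sum_i p_i\vec{g}_i + \sum_j q_j\e_j + \vec{r}$ with $q_j \in \langle\Omega(k-|\e_j|)\rangle$; and each recorded $\lm(p_i\vec{g}_i)$, resp.\ $\lm(q_j\e_j)$, equals a monomial that appeared in some $\vec{f}_{(\ell)}$, hence is $\leq \lm(\vec{f})$, while $\lm(\vec{f})$ itself must survive in one of these terms or in $\vec{r}$; this yields the stated $\max$ identity exactly as in Proposition~\ref{prop:division}.

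For uniqueness, suppose $\sum_i p_i\vec{g}_i + \sum_j q_j\e_j + \vec{r} = \sum_i p'_i\vec{g}_i + \sum_j q'_j\e_j + \vec{r}'$ with $\vec{r},\vec{r}'$ both $(\Omega,k)$-reduced. Subtracting, $\vec{r}-\vec{r}' = \sum_i (p'_i-p_i)\vec{g}_i + \sum_j (q'_j-q_j)\e_j$, and since $\langle\Omega(k-|\e_j|)\rangle$ is an ideal we have $q'_j - q_j \in \langle\Omega(k-|\e_j|)\rangle$, so $\vec{r}-\vec{r}' \in M = \langle G\rangle_{\Omega,k}$. Now $\vec{r}-\vec{r}'$ is itself $(\Omega,k)$-reduced: every monomial occurring in it occurs in $\vec{r}$ or in $\vec{r}'$, hence is divisible by no $\lm(\vec{g}_i)$; and $N_{\Omega,k}$ is $K$-linear, because by Theorem~\ref{th:equivalent_conditions} each component map $N_{\Omega(k-|\e_j|)}$ is the projection of $R$ onto the span of the $\Omega(k-|\e_j|)$-reduced monomials along the ideal $\langle\Omega(k-|\e_j|)\rangle$, so $N_{\Omega,k}(\vec{r}-\vec{r}') = N_{\Omega,k}(\vec{r}) - N_{\Omega,k}(\vec{r}') = \vec{r}-\vec{r}'$. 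If $\vec{r}-\vec{r}' \neq 0$, apply the defining property of the $(\Omega,k)$-Gr\"obner basis $G$ to $\vec{r}-\vec{r}' \in M\setminus\{0\}$. Either $\lm(\vec{g}_i)$ divides $\lm(\vec{r}-\vec{r}')$ for some $i$, contradicting that $\vec{r}-\vec{r}'$ is reduced with respect to $G$; or $\lm(q\e_j)$ divides $\lm(\vec{r}-\vec{r}')$ for some $j$ and some nonzero $q\in\langle\Omega(k-|\e_j|)\rangle$, so writing $\lm(\vec{r}-\vec{r}') = X\e_j$ we get $\lm(q)$ divides $X$, and since $\Omega(k-|\e_j|)$ is a Gr\"obner basis there is $g\in\Omega(k-|\e_j|)$ with $\lm(g)$ dividing $\lm(q)$, hence dividing $X$; but $X$ is a monomial of the $j$-th component of $\vec{r}-\vec{r}'$, contradicting $N_{\Omega(k-|\e_j|)}((\vec{r}-\vec{r}')_j) = (\vec{r}-\vec{r}')_j$. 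Therefore $\vec{r}=\vec{r}'$.

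The step I expect to be most delicate is the interleaving bookkeeping in the existence part: one must check that inserting $N_{\Omega,k}$ between $G$-reduction steps preserves both termination and the leading-monomial bound, and that the accumulated $q_j$ really lie in $\langle\Omega(k-|\e_j|)\rangle$ with $\lm(q_j\e_j)\leq\lm(\vec{f})$ --- this rests on the multiplicativity remark for $\lm$ and on the fact that $\Omega(k)$ is a Gr\"obner basis whenever $\Omega$ is. The conceptual heart of uniqueness --- that no nonzero $(\Omega,k)$-reduced vector lies in $M$ --- is then immediate from the definition of an $(\Omega,k)$-Gr\"obner basis once the $K$-linearity of $N_{\Omega,k}$ has been recorded.
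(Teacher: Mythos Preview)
Your argument is correct and is exactly the adaptation of the standard commutative division/normal-form argument that the paper has in mind: the paper gives no proof beyond the remark ``The proof is straightforward. See \cite[Theorem~1.5.9 and Theorem~1.6.7]{MR1287608}'', and your existence-by-reduction plus uniqueness-via-the-Gr\"obner-property is precisely how those cited results are proved, transported to the $(\Omega,k)$ module setting. The only points one might add for polish are trivial: make explicit that the procedure halts when the current vector is both $G$-irreducible and fixed by $N_{\Omega,k}$, and note that in the second case of uniqueness the module divisibility $\lm(q)\e_j \mid X\e_j$ is left divisibility while the contradiction with $N_{\Omega(k-|\e_j|)}$ uses two-sided divisibility in $R$ --- you handle this correctly, but it is worth flagging since the paper uses two different notions of ``divides''.
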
 
The proof is straightforward. See 
\cite[Theorem 1.5.9. and Theorem 1.6.7.]{MR1287608}.
Let $\vec{f}$ and $\vec{g}$ be vectors in $R^m$. The S-vector of
$\vec{f}$ and $\vec{g}$, denoted $S(\vec{f},\vec{g})$, is
\begin{eqnarray*}
  S(\vec{f},\vec{g}):= 
  \frac{\lcm(\lt(\vec{f}),\lt(\vec{g}))}{\lt(\vec{f})}\vec{f} 
- \frac{\lcm(\lt(\vec{f}),\lt(\vec{g}))}{\lt(\vec{g})}\vec{g} .
\end{eqnarray*}
\begin{definition}
 Let $\vec{f} \in M$ be a vector and $g \in R$ be polynomial. 
Set $\lm(\vec{f}) = x_{i_1}x_{i_2}\dots x_{i_k}\e_{i_\vec{f}}$ and 
$\lp(g) = x_{j_1}x_{j_2}\dots x_{j_h}$.
We define
 the coefficient set of S-vectors of $\vec{f}$ with $g$, denoted
 $C(\vec{f};g)$ as
\begin{eqnarray*}
 &&\{(1,x_{i_1}\dots x_{i_\delta},x_{i_{\delta+k+1}}\dots x_{i_{k}}) \in
  R^3 | (i_{\delta+1},\dots,i_{\delta + k}) = (j_1,\dots, j_k)
\}\\
&\cup&
\{(x_{j_1}\dots x_{j_\delta}, 1, x_{i_{h-\delta +1}}\dots x_{i_k}) \in
  R^3 | (i_{1},\dots,i_{h - \delta}) = (j_{\delta+1},\dots, j_h)
\}.
\end{eqnarray*}
\end{definition}
The S-vector of $\g_i$ and $\omega \in \Omega$, 
with regard to $(z,p,q) \in C(\g_i;\omega)$ is, given by
\[
 S(\g_i,\omega;z,p,q) = z\g_i - p\omega q\e_{\mu_i}
\]
where $\mu_i$ represents a position of the non-zero coordinate 
of $\vec{g}_i$.
\begin{theorem}
\label{th:Buchberger_module}
 Let $G = \{\vec{g}_1\dots,\vec{g}_t\}$ be a set of non-zero
 vectors in $\bigoplus_{i=1}^mR\e_i$. Then $G$ is a $(\Omega,k)$-Gr\"obner
 basis for the 
 submodule $M = \langle \vec{g}_1\dots,\vec{g}_t \rangle_{\Omega,k}$ 
 if and only if for all
 $i,j \in \{1,\dots,t\}$
\[
 S(\vec{g}_i, \vec{g}_{j}) \xrightarrow{G}_{\Omega,k} \vec{0},
\]
 and for all  $i \in \{1,\dots,t\}, \omega \in \Omega$ and 
 $(z,p,q) \in C(\vec{g}_i,\omega)$,
\[
 S(\vec{g}_i;z,p,q) \xrightarrow{G}_{\Omega,k} \vec{0}.
\]
\end{theorem}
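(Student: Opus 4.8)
\emph{Overall plan.} The assertion is the module-theoretic, $(\Omega,k)$-relative analogue of Theorem~\ref{th:Buchberger}, so the plan is to imitate that proof (and the proof of Lemma~\ref{lem:linear_combination}) one free summand $R\e_i$ at a time, carrying the truncated ideals $\langle\Omega(k-|\e_i|)\rangle$ along. The ``only if'' direction is short; the ``if'' direction is where the work lies.

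\emph{The ``only if'' direction.} Assume $G$ is a $(\Omega,k)$-Gr\"obner basis for $M$. Each module S-vector $S(\vec{g}_i,\vec{g}_j)$ is an $R$-linear combination of $\vec{g}_i$ and $\vec{g}_j$, so it lies in $M$; each mixed S-vector $S(\vec{g}_i;z,p,q)=z\vec{g}_i-p\omega q\,\e_{\mu_i}$ lies in $M$ as well, since $p\omega q\in\langle\Omega\rangle\subseteq\langle\Omega(k-|\e_{\mu_i}|)\rangle$. Hence it suffices to prove the module analogue of Theorem~\ref{th:equivalent_conditions}(\ref{item:remainder}): every $\vec{f}\in M$ satisfies $\vec{f}\xrightarrow{G}_{\Omega,k}\vec{0}$. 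For $\vec{f}\in M$ with $\vec{f}\neq\vec{0}$, run the division process of Proposition~\ref{prop:division_grobner_module} to obtain an $(\Omega,k)$-reduced $\vec{r}$ with $\vec{f}=\sum p_i\vec{g}_i+\sum q_i\e_i+\vec{r}$; then $\vec{r}\in M$. If $\vec{r}\neq\vec{0}$, the defining property of a $(\Omega,k)$-Gr\"obner basis applied to $\vec{r}$ forces either $\lm(\vec{g}_i)\mid\lm(\vec{r})$ for some $i$, contradicting reducedness, or $\lm(q\e_j)\mid\lm(\vec{r})$ for some $q\in\langle\Omega(k-|\e_j|)\rangle$; in the latter case, since $\Omega(k-|\e_j|)$ is a Gr\"obner basis there is $\omega'\in\Omega(k-|\e_j|)$ with $\lm(\omega')$ dividing the leading monomial of the $j$-th coordinate of $\vec{r}$, so $N_{\Omega(k-|\e_j|)}$ alters that coordinate, contradicting $N_{\Omega,k}(\vec{r})=\vec{r}$. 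Thus $\vec{r}=\vec{0}$.

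\emph{The ``if'' direction.} Assume all the listed S-vectors $(\Omega,k)$-reduce to $\vec{0}$. Fix $\vec{f}\in M$, $\vec{f}\neq\vec{0}$, and write $\vec{f}=\sum_i p_i\vec{g}_i+\sum_i q_i\e_i$ with $p_i\in R$, $q_i\in\langle\Omega(k-|\e_i|)\rangle$, expanding each $q_i$ as a sum of terms $c\,u\omega w$ with $u,w$ monomials and $\omega\in\Omega(k-|\e_i|)$ (possible since $\Omega(k-|\e_i|)$ is a Gr\"obner basis). Among all such representations pick one minimizing $X:=\max$ of the leading monomials of the pieces $p_i\vec{g}_i$ and $(u\omega w)\e_i$ that occur; always $X\geq\lm(\vec{f})$. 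If $X=\lm(\vec{f})$, then $\lm(\vec{f})$ is the leading monomial of some piece, which yields either $\lm(\vec{g}_i)\mid\lm(\vec{f})$ (type~1) or $\lm((u\omega w)\e_i)\mid\lm(\vec{f})$ with $u\omega w\in\langle\Omega(k-|\e_i|)\rangle$ (type~2), and we are done. Suppose instead $X>\lm(\vec{f})$. All pieces with leading monomial exactly $X$ live in a single coordinate $R\e_{i_0}$, and the sum of their leading terms vanishes since $X>\lm(\vec{f})$; by the coordinate-wise version of Lemma~\ref{lem:linear_combination}, the sum $g$ of their leading parts (each written as a monomial left-multiple of some $\vec{g}_i$ or of some $\omega\e_{i_0}$) is a $K$-linear combination of the trivial S-vectors $S(\,\cdot\,,\,\cdot\,;1,1,1)$ of pairs of those leading parts, each of leading monomial $<X$. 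By the overlap analysis encoded in the definitions of $C(\vec{g}_i,\vec{g}_j)$ and $C(\vec{g}_i;\omega)$, such a trivial S-vector is a monomial left-multiple of $S(\vec{g}_i,\vec{g}_j)$ or of $S(\vec{g}_i;z,p,q)$ when at least one of the two leading parts is of $\vec{g}$-type, and otherwise (both of $\omega$-type) it is a monomial multiple of an ordinary S-polynomial of two elements of $\Omega(k-|\e_{i_0}|)$. In the first two situations the hypotheses $S(\vec{g}_i,\vec{g}_j)\xrightarrow{G}_{\Omega,k}\vec{0}$ and $S(\vec{g}_i;z,p,q)\xrightarrow{G}_{\Omega,k}\vec{0}$, together with Proposition~\ref{prop:division_grobner_module}, rewrite the trivial S-vector as $\sum p'_l\vec{g}_l+\sum q'_l\e_l$ with $q'_l\in\langle\Omega(k-|\e_l|)\rangle$ and all leading monomials $<X$; in the third situation, since $\Omega(k-|\e_{i_0}|)$ is a Gr\"obner basis, Theorem~\ref{th:Buchberger} and Theorem~\ref{th:equivalent_conditions} rewrite that S-polynomial modulo $\Omega(k-|\e_{i_0}|)$, staying inside $\langle\Omega(k-|\e_{i_0}|)\rangle$ and lowering its leading monomial below $X$. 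Substituting these rewrites into $g$ and $g$ back into $\vec{f}$ produces a representation of $\vec{f}$ of the required form with a strictly smaller $X$, contradicting minimality. Hence $X=\lm(\vec{f})$, and $G$ is a $(\Omega,k)$-Gr\"obner basis.

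\emph{Where the difficulty lies.} There is no single hard idea beyond what already appears in the proof of Theorem~\ref{th:Buchberger}; the obstacle is bookkeeping. One must verify that unwinding a relation $\xrightarrow{G}_{\Omega,k}\vec{0}$ into an honest identity $\vec{f}=\sum p_l\vec{g}_l+\sum q_l\e_l$ keeps each $q_l$ inside the correct truncated ideal $\langle\Omega(k-|\e_l|)\rangle$ and never raises a leading monomial, and one must treat the three kinds of overlap (generator--generator, generator--ideal, ideal--ideal) uniformly, the last being absorbed by $N_{\Omega,k}$ via Buchberger's theorem for $\Omega(k-|\e_{i_0}|)$. Keeping these degree and ideal-membership constraints consistent through every rewriting step is the part that demands care.
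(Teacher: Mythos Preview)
Your proposal is correct and follows precisely the route the paper indicates: the paper's own proof is the single sentence ``The proof is basically the same as the one for Theorem~\ref{th:Buchberger},'' and what you have written is a careful unpacking of exactly that adaptation, with the extra bookkeeping for the $(\Omega,k)$-truncated ideals and the three overlap types (generator--generator, generator--ideal, ideal--ideal) made explicit. Your treatment in fact supplies more detail than the paper does, in particular the observation that the ideal--ideal overlaps are absorbed because each $\Omega(k-|\e_i|)$ is itself a Gr\"obner basis.
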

The proof is basically the same as the one for
Theorem~\ref{th:Buchberger}.

Let $G=\{\vec{g}_1,\dots,\vec{g}_r\}$ be a $(\Omega,k)$-Gr\"obner basis.
For any $\vec{f}\in \bigoplus_{i=1}^mR\e_i$, 
let $\vec{r}$ be a $(\Omega,k)$-reduced vector with respect to $G$
such that $\vec{f} \xrightarrow{G}_+ \vec{r}$. 
Proposition~\ref{prop:division_grobner_module} implies 
that $\vec{r}$ is unique. 
We call $\vec{r}$ the $(\Omega,k)$-normal form of $\vec{f}$ 
with respect to $G$, denoted $N_{G,\Omega,k}(\vec{f})$.

\subsection{Projective resolution}
The proof of Proposition~\ref{prop:syzygy_term} and 
Theorem~\ref{th:Main_theorem} in this section is based on the arguments 
in \cite[Section 3.4.]{MR1287608}.
Let $M$ be a graded projective $R$-module generated by $\e_1, \dots, \e_s$.
 That is,
\[
 M = \bigoplus_{i=1}^mR\e_i.
\]
Let $\rho_{\Omega,k}$ be a natural projection
\[
 \rho_{\Omega,k} \colon \bigoplus_{i=1}^mR\e_i \rightarrow 
 \bigoplus_{i=1}^mR/\langle \Omega(k-|\e_i|) \rangle \e_i
\]
Let $F = \{\vec{f}_1,\dots, \vec{f}_s\}$ be a subset of $M$. We define
the graded $R$-module $N$ as
\[
 N = \bigoplus_{i=1}^sR\e'_j.
\]
with grading $|\e'_j| = |\vec{f}_j|$. We also define the degree
preserving $R$-homomorphism 
$\varphi\colon N \rightarrow M$ by $\varphi_F(\e'_j) = \vec{f}_j$.
We call the kernel of composite $\rho_{\Omega,k} \circ \varphi_F$ the
$(\Omega,k)$-syzygy of $F$, denoted $\Syz_{\Omega,k}(F)$. That is, 
\[
 \Syz_{\Omega,k}(F) = \ker \rho_{\Omega,k} \circ \varphi_F 
                   = \varphi_F^{-1}\left(\bigoplus_{i=1}^m
		   \Omega(k-|\e_i|)\e_i\right).
\]
In this section, we study an algorithm to find a $(\Omega,k)$-generating
set of the submodule $\Syz_{\Omega,k}(F)$ of $N$ for a given finite 
subset $F$ of $M$.
The next proposition shows how to compute these generating set in a
special case.

\begin{proposition}
\label{prop:syzygy_term}
Set $\Lt(\Omega) := \{\lm(\omega) | \omega \in \Omega\}$.
Let $\X_1,\dots, \X_s \in R^m$ be monomials
 of $M$. Set $\X_{i,j} = \lcm(\X_i,\X_j)$. Then 
$\Syz_{\Lt(\Omega),k}(\X_1,\dots,\X_s)$ is $\Lt(\omega),k$-generated by,
\begin{eqnarray*}
LM(\X_1,\dots,\X_s) &:=& \left\{\left.
\frac{\X_{i,j}}{\X_i}\e'_i -
  \frac{\X_{i,j}}{c_j\X_j}\e'_j \right| i,j \in \{1,\dots, s\}
\right\}\\
&\cup&
\left\{
z\e'_i
 \left| i \in \{1,\dots,s\}, z\in R, {}^\exists \lambda \in \Lambda, 
 {}^\exists p,q \in R 
\atop 
  \text{ s.t. } |\omega_\lambda| \leq k,
  (z,p,q) \in C(\X_i;\omega_\lambda) 
  \text{ for some } z \in R.
       \right.
\right\}.
\end{eqnarray*}
\end{proposition}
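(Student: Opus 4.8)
The plan is to mimic the classical description of syzygies of monomial modules (as in \cite[Section~3.4]{MR1287608}), adapted to the $(\Omega,k)$-setting where we must also track the ``extra'' relations coming from the ideal $\langle\Omega(k-|\e_i|)\rangle$.

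\medskip

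\noindent\textbf{Step 1: The easy inclusion.} First I would check that each listed vector actually lies in $\Syz_{\Lt(\Omega),k}(\X_1,\dots,\X_s)$. For the elements of the first type, $\varphi_F\left(\frac{\X_{i,j}}{\X_i}\e'_i - \frac{\X_{i,j}}{c_j\X_j}\e'_j\right) = \X_{i,j} - \X_{i,j} = 0$ (up to a scalar), since $\X_{i,j} = \lcm(\X_i,\X_j)$ is by definition a common left multiple of $\X_i$ and $\X_j$ when it is non-zero; this maps to $0$ under $\rho_{\Lt(\Omega),k}$ a fortiori. For the elements of the second type, $z\e'_i$ is sent to $z\X_i$, and the defining condition $(z,p,q)\in C(\X_i;\omega_\lambda)$ means precisely that $z\X_i = p\,\lm(\omega_\lambda)\,q\,\e_{\mu_i}$ as monomials in $R^m$, which lies in $\langle\Lt(\Omega)(k-|\e_{\mu_i}|)\rangle\e_{\mu_i}$ because $|\omega_\lambda|\le k$; hence it is killed by $\rho_{\Lt(\Omega),k}$. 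So the submodule $(\Lt(\Omega),k)$-generated by $LM(\X_1,\dots,\X_s)$ is contained in the syzygy module.

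\medskip

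\noindent\textbf{Step 2: The reverse inclusion — the main work.} Take an arbitrary $\vec{h}=\sum_i a_i \vec{m}_i \in \Syz_{\Lt(\Omega),k}(\X_1,\dots,\X_s)$, where each $\vec{m}_i$ is a term with $\lm(\vec{m}_i)\e'$-component in position $j_i$, and I would argue by induction on the leading monomial $\vec{U}:=\lm(\vec{h})$ with respect to the fixed term order on $N=\bigoplus R\e'_j$. Group the terms of $\vec{h}$ whose image under $\varphi_F$ has leading monomial equal to the maximal one, say $\vec{W}=\max_i \lm(\vec{m}_i\X_{j_i})$. Two cases arise. (a) If $\vec{W}\ne 0$ and the max is attained at two indices giving monomials $\vec{m}_a\X_{j_a}$ and $\vec{m}_b\X_{j_b}$, then $\X_{j_a,j_b}=\lcm(\X_{j_a},\X_{j_b})$ divides $\vec{W}$, and subtracting an appropriate $R$-multiple of the type-one generator $\frac{\X_{j_a,j_b}}{\X_{j_a}}\e'_{j_a}-\frac{\X_{j_a,j_b}}{c_{j_b}\X_{j_b}}\e'_{j_b}$ cancels one of these leading terms without raising $\vec{U}$ — this is where Lemma~\ref{lem:linear_combination}-style bookkeeping is used, exactly as in the proof of Theorem~\ref{th:Buchberger}. (b) The subtle point specific to the $(\Omega,k)$-situation: it may happen that $\varphi_F(\vec{h})$ has a term that is non-zero in $M$ but lands in $\bigoplus_i \langle\Lt(\Omega)(k-|\e_i|)\rangle\e_i$, i.e. a cancellation that is ``invisible'' at the level of leading monomials because it is absorbed by the ideal. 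In that case some term $\vec{m}_i$ satisfies $\lm(\vec{m}_i)\X_{j_i}\in\langle\Lt(\Omega)(k-|\e_{\mu_{j_i}}|)\rangle\e$, which forces $\lm(\vec{m}_i)\X_{j_i}$ to be left-divisible by $\lm(\omega_\lambda)\e$ for some $\omega_\lambda$ with $|\omega_\lambda|\le k$; unwinding the definition of $C(\X_{j_i};\omega_\lambda)$ shows $\vec{m}_i$ is an $R$-multiple (on the left, times a scalar) of some type-two generator $z\e'_{j_i}$, so subtracting it reduces $\vec{U}$. Iterating (a) and (b) and using that $<$ is a well-ordering terminates the induction.

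\medskip

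\noindent\textbf{Step 3: Assembly.} Combining Steps 1 and 2 gives equality of the two submodules, which is the assertion. The main obstacle I anticipate is the careful case analysis in Step 2(b): one has to be precise about what ``$\lm(q\e_j)$ divides $\lm(\vec{f})$'' means in the $(\Omega,k)$-Gr\"obner setting and to verify that every ideal-absorbed cancellation is genuinely captured by a coefficient triple in $C(\X_i;\omega_\lambda)$ with $|\omega_\lambda|\le k$ — the length bound $|\omega_\lambda|\le k$ is essential here and should be tracked throughout. Everything else is a routine transcription of the commutative monomial-syzygy argument of \cite[Section~3.4]{MR1287608}, with left/right multiplications kept separate as dictated by the definition of $C(\vec{f};g)$.
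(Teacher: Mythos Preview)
Your overall strategy matches the paper's: both adapt the classical monomial-syzygy argument of \cite[Section~3.4]{MR1287608} to the $(\Omega,k)$-setting. The paper's organization is a bit more direct than your leading-monomial induction: it observes that the syzygy condition is linear in each monomial $\X$ appearing in $\sum h_i\X_i$, reduces to the case where each $h_i$ is a single term $c_iX'_i$ with $X'_i\X_i=\X$, and then writes $(c_{i_1}X'_{i_1},\dots,c_{i_t}X'_{i_t})$ as an explicit telescoping sum of type-one generators plus a single remainder term $(c_{i_1}+\dots+c_{i_t})\frac{\X}{\X_{i_t}}\e'_{i_t}$. This avoids any induction.

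There is a genuine gap in your Step~2(b). You assert that whenever a term $\vec{m}_i\X_{j_i}$ is absorbed by the ideal $\langle\Lt(\Omega)(k-|\e|)\rangle\e$, the coefficient $\vec{m}_i$ must be a left $R$-multiple of some type-two generator $z\e'_{j_i}$ with $(z,p,q)\in C(\X_{j_i};\omega_\lambda)$. This is false: the set $C(\X_{j_i};\omega_\lambda)$ only records situations where $\lm(\omega_\lambda)$ \emph{overlaps} $\X_{j_i}$ (either as a subword or overhanging on the left). If instead the factor $\lm(\omega_\lambda)$ sits entirely inside the coefficient $X'_{j_i}$, then $C(\X_{j_i};\omega_\lambda)$ may well be empty and no type-two generator applies. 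The paper handles exactly this dichotomy: when $C(\X_{i_t};\omega)$ is nonempty it factors the remainder through a type-two generator $z\e'_{i_t}$, but when $C(\X_{i_t};\omega)$ is empty it observes that $X'_{i_t}=p\,\lm(\omega)\,q'$ for some monomials $p,q'$, so that $X'_{i_t}\e'_{i_t}\in\bigoplus_i\langle\Lt(\Omega)(k-|\e'_i|)\rangle\e'_i$ directly. This last piece is absorbed not by any element of $LM(\X_1,\dots,\X_s)$ but by the $\sum q_i\e'_i$ part built into the very definition of $(\Lt(\Omega),k)$-generation. Your anticipated obstacle is therefore real, and its resolution is the opposite of what you expect: one does \emph{not} verify that every ideal-absorbed cancellation is captured by some $C(\X_i;\omega_\lambda)$, but rather splits off a third case handled by the ambient ideal.
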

\begin{proof}
It is easy to see that 
$LM(\X_1,\dots,\X_s) \subset \Syz_{\Lt(\Omega),k}(\X_1,\dots,\X_s)$.

To prove the converse, let
 $(h_1,\dots,h_s) \in \Syz_{\Lt(\Omega),k}(\X_1,\dots,\X_s)$. That is,
\begin{eqnarray}
 \label{eq:1}
  h_1\X_1 + \dots + h_s\X_x = \sum_i p_i\e_i
\end{eqnarray}
for some $p_i \in \langle \Lt(\Omega(k-|\e_{i}|)) \rangle$
Let $\X$ be any
 monomial in $\bigoplus_{i=1}^mR\e_i$. Then the coefficient of $\X$ 
in $h_1\X_1 + \dots + h_s\X_x -
 \sum_{i}p_i\e_i$ must be zero. Thus it is enough to consider
 the case for which $h_i = c_iX'_i$ with $X'_i\X_i = \X$. Let 
$c_{i_1},\dots, c_{i_t}$, with $i_1 < i_2 < \dots < i_t$ be the
 non-zero $c_j$'s. Therefore, we have:
\begin{eqnarray*}
 (h_1,\dots,h_s) &=& (c_1X'_1,\dots, c_sX'_s) = 
  c_{i_1}X'_{i_1}\e'_{i_1} + \dots + c_{i_t}X'_{i_t}\e'_{i_t}\\
  &=& c_{i_1}\frac{\X}{\X_{i_1}}\e'_{i_1} + \dots +
            c_{i_t}\frac{\X}{\X_{i_t}}\e'_{i_t}\\
 &=&
  c_{i_1}\frac{\X}{\X_{i_1i_2}}(\frac{\X_{i_1i_2}}{\X_{i_1}}\e'_{i_1}
  - \frac{\X_{i_1i_2}}{\X_{i_2}}\e'_{i_2})\\
&\phantom{=}& +  (c_{i_1} + c_{i_2})
  \frac{\X}{\X_{i_2i_3}}(\frac{\X_{i_2i_3}}{\X_{i_2}}\e'_{i_2}
  - \frac{\X_{i_2i_3}}{\X_{i_3}}\e'_{i_3}) + \dots\\
&\phantom{=}&  +  (c_{i_1} +\dots +  c_{i_{t-1}})
  \frac{\X}{\X_{i_{t-1}i_t}}(\frac{\X_{i_{t-1}i_t}}{\X_{i_{t-1}}}\e'_{i_{i-1}}
  - \frac{\X_{i_{t-1}i_t}}{\X_{i_t}}\e'_{i_t})\\
&\phantom{=}& (c_{i_1} +\dots +
 c_{i_{t}})\frac{\X}{\X_{i_t}}\e'_{i_t},
\end{eqnarray*}
If $h_1\X_1 + \dots + h_s\X_x = 0$, then we have 
$c_{i_1} +\dots + c_{i_{t}}$ and it follows that 
$(h_1,\dots,h_s) \in \langle LM(\X_1,\dots,\X_s) \rangle$. If not, there
 exists $i\in \{0,\dots m\}$ and 
$\omega \in \Omega(k-|\e'_{i}|)$ such that 
 $\X = X'_{i_t}\X_{i_t} = p\lm(\omega) q\e_i$ where $p$ and $q$ are monomials
 in $R$.
If $C(\X_{i_t},\omega)$ is not empty, then there exists 
$(z,p',q) \in C(\X_{i_t},\omega)$ such that 
$z\X_{i_t} = p'\lm(\omega)q \e_i$. This implies that 
\[
 \frac{\X}{\X_{i_t}}\e_{i_t} = X'_{i_t}\e'_{i_t} = z'z\e'_{i_t}
\]
for some monomial $z'$ in $R$.
If $C(\X_{i_t},\omega)$ is empty, then there exists a monomial $q'$ in
 $R$ such that $X'_{i_t} = p\omega q'$. This implies that
\[
 \frac{\X}{\X_{i_t}}\e_{i_t} = X'_{i_t}\e'_{i_t} = p\omega q'\e'_{i_t}
 \in \bigoplus_{i=1}^s\Lt(\Omega(k-|\e'_{i}|))\e'_i.
\]
Thus we have desired conclusion.
\end{proof}
Let $\{\g_1,\dots, \g_t\}$ be a $(\Omega,k)$-\Groebner basis,
 where we assume that the
$\g_i$'s are monic. For $i\in \{1,\dots,t\}$, we let $\lm(\g_i) = \X_i$ and
for $i \neq j \in \{1,\dots,t\}$, we let $\X_{ij} = \lcm(\X_i,\X_j)$. 
Then the S-polynomial of $\g_i$ and $\g_j$
is, given by
\[
 S(\g_i,\g_j) = \frac{\X{ij}}{\X_i}\g_i - \frac{\X{ij}}{\X_j}\g_j.
\]
We note that $\lm(S(\mathbf{g}_i,\mathbf{g}_j)) < \mathbf{X}_{ij}$.
By Proposition~\ref{prop:division_grobner_module}, we have
\[
 S(\g_i,\g_j) = \sum_{\nu = 1}^{t} u_{ij\nu}\g_\nu 
 + \sum_{\epsilon}p_\epsilon \omega_\epsilon q_\epsilon \e_{i_\epsilon}
\]
where $u_{ij\nu},p_\epsilon, q_\epsilon \in R, \omega_\epsilon \in
 \Omega(k-|\e_{i_\epsilon}|)$, such that
\[
 \max\left\{\max_{1 \leq \nu \leq t}(\lm(u_{ij\nu})\lm(\g_\nu)),
       \max_{\epsilon}(\lm(p_\epsilon \omega_\epsilon
       q_\epsilon)\e_{i_\epsilon}) \right\}
       = \lm(S(\g_i,\g_j)).
\]
We now define 
\[
 \s_{ij} = \frac{\X{ij}}{\X_i}\e'_i - \frac{\X{ij}}{\X_j}\e'_j 
- (u_{ij1},\dots,u_{ijt}) \in R^t.
\]
It is easy to see that $\s_{ij} \in \syz(\g_1,\dots,\g_t)$.

Similarly, the S-polynomial of $\g_i$ and $\omega_\lambda, \lambda\in
\Lambda$, with regard to $(z,p,q) \in C(\g_i;\omega_\lambda )$ is, given by
\[
 S(\g_i,\omega_\lambda;z,p,q) = z\g_i - p\omega_\lambda q\e_{\mu_i}
\]
where $\mu_i$ represents a position of the non-zero coordinate 
of $\vec{g}_i$.
We note that $\lm(S(\g_i,\omega_\lambda;w)) < w$. 
By Proposition~\ref{prop:division_grobner_module}, we have
\[
 S(\g_i,\omega_\lambda;z,p,q) = \sum_{\nu = 1}^{t}h'_{i\nu}\g_\nu
 + \sum_{\epsilon}p_\epsilon \omega_\epsilon q_\epsilon \e_{i_\epsilon}.
\]
for some $h'_{i\nu},p_\epsilon,q_\epsilon \in R$ and 
$\omega_\epsilon \in \Omega(k-|\e_{i_\epsilon}|)$, such that 
\[
\max\left\{\max_{1\leq \nu \leq t}(\lm(h'_{i\nu})\lm(\g_\nu)),
  \max_{\epsilon}(\lm(p_\epsilon \omega_\epsilon
       q_\epsilon)\e_{i_\epsilon}) \right\}
 = \lm(S(\g_i,\omega_\lambda;w)).
\]
We now define 
\[
  s_i(\omega_\lambda;z,p,q) = z\e'_i - (h'_{i1},\dots,h'_{it}).
\]
It is also easy to see that 
$s_i(\omega_\lambda;z,p,q) \in \syz(\g_1,\dots,\g_t)$.
\begin{theorem}
$\Syz_{\Omega,k}(\vec{g}_1,\dots,\vec{g}_t)$ is $(\Omega,k)$-generated by 
\begin{eqnarray*}
M(\g_1,\dots,\g_t) &:=& \{\vec{s}_{i,j}| i,j \in \{1,\dots,t\}\} \\
&\cup&
 \{\vec{s}_{i}(\omega_\lambda;z,p,q) | i \in \{1,\dots,s\}, \lambda \in
 \lambda, |\omega_\lambda| \leq k,(z,p,q) \in C(\g_i,\omega_\lambda)\}.
\end{eqnarray*}
 \label{th:Main_theorem}
\end{theorem}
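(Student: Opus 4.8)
The plan is to adapt the standard Buchberger-style argument (as in \cite[Section 3.4]{MR1287608}) to the $(\Omega,k)$-setting, using Proposition~\ref{prop:syzygy_term} as the ``monomial'' base case. First I would observe, as is immediate from the definitions, that every element listed in $M(\g_1,\dots,\g_t)$ does lie in $\Syz_{\Omega,k}(\g_1,\dots,\g_t)$: this is exactly the content of the two remarks already made in the text, namely $\s_{ij} \in \syz(\g_1,\dots,\g_t)$ and $\s_i(\omega_\lambda;z,p,q) \in \syz(\g_1,\dots,\g_t)$. So the real work is the reverse inclusion: every element of $\Syz_{\Omega,k}(\g_1,\dots,\g_t)$ lies in $\langle M(\g_1,\dots,\g_t)\rangle_{\Omega,k}$.

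For the reverse inclusion I would take an arbitrary $\vec{h} = (h_1,\dots,h_t) \in \Syz_{\Omega,k}(\g_1,\dots,\g_t)$, so that $\sum_\nu h_\nu \g_\nu = \sum_i q_i \e_i$ with $q_i \in \langle\Omega(k-|\e_i|)\rangle$. Set $\X = \max_\nu \lm(h_\nu \g_\nu) = \max_\nu \lp(h_\nu)\lm(\g_\nu)$ and, among all representations of (the image of) $\vec{h}$ modulo $M(\g_1,\dots,\g_t)$, choose one with $\X$ minimal in the term order. The goal is to show that under this minimality either $\vec{h}$ is already of the required form, or one can strictly decrease $\X$, a contradiction. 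Write, for each $\nu$ with $\lp(h_\nu)\lm(\g_\nu)=\X$, the leading term $\lt(h_\nu) = c_\nu X'_\nu$; then $\sum_{\nu}c_\nu X'_\nu \lm(\g_\nu)$ together with the $\lt$'s of the $q_i \e_i$ contributing to $\X$ forms a syzygy among the monomials $\X_\nu := \lm(\g_\nu)$ in the sense of Proposition~\ref{prop:syzygy_term}. Applying that proposition, this ``top-degree'' piece is a $\Lt(\Omega),k$-combination of the generators $\frac{\X_{ij}}{\X_i}\e'_i - \frac{\X_{ij}}{\X_j}\e'_j$ and the $z\e'_i$ with $(z,p,q)\in C(\X_i;\omega_\lambda)$. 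Now I subtract from $\vec{h}$ the corresponding $R$-combination of the \emph{true} syzygies $\s_{ij}$ and $\s_i(\omega_\lambda;z,p,q)$: by construction these have the same leading behaviour (their ``monomial parts'' are exactly the Proposition~\ref{prop:syzygy_term} generators), so the difference has all its $\lm(h_\nu\g_\nu)$ strictly below $\X$, while the error terms introduced — the $(u_{ij\nu})$ and $(h'_{i\nu})$ tails, which were controlled by Proposition~\ref{prop:division_grobner_module} to have $\lm(u_{ij\nu})\lm(\g_\nu) \le \lm(S(\g_i,\g_j)) < \X_{ij} \le \X$, and likewise for the $\omega$-S-vectors — also stay strictly below $\X$. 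Hence we obtain a new representation with smaller $\X$, contradicting minimality unless the top piece was empty, i.e.\ $\X = \lm(\vec{h})$ already witnesses membership. Feeding this back through the chain of subtractions (a standard descending induction on the well-ordered set of values of $\X$, valid since $<$ is a well-ordering) expresses $\vec{h}$ as the required $(\Omega,k)$-combination of $M(\g_1,\dots,\g_t)$.

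The main obstacle I anticipate is bookkeeping the two \emph{kinds} of generator uniformly — the ``internal'' S-vectors $\s_{ij}$ coming from pairs $\g_i,\g_j$ and the ``boundary'' S-vectors $\s_i(\omega_\lambda;z,p,q)$ coming from overlaps of $\lm(\g_i)$ with a relator $\omega_\lambda$ — and making sure that when the top-degree piece produced by Proposition~\ref{prop:syzygy_term} involves a term $z\e'_i$ with $(z,p,q)\in C(\X_i;\omega_\lambda)$ (rather than an honest lcm-syzygy), the hypothesis $S(\g_i,\omega_\lambda;z,p,q)\xrightarrow{G}_{\Omega,k}\vec{0}$ of Theorem~\ref{th:Buchberger_module} is genuinely what licenses the corresponding reduction; one must also handle the sub-case, already visible in the proof of Proposition~\ref{prop:syzygy_term}, where $C(\X_i;\omega_\lambda)$ is empty and the offending term is absorbed directly into $\bigoplus_i \langle\Omega(k-|\e_i|)\rangle\e_i$. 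The noncommutativity forces one to carry the left \emph{and} right cofactors $p,q$ throughout, so the ``$\max$'' estimates from Proposition~\ref{prop:division_grobner_module} must be applied verbatim rather than in a commutative shorthand, but no new idea beyond the commutative template of \cite{MR1287608} is required. I would therefore present the argument briefly, citing \cite[Section 3.4]{MR1287608} for the routine parts and spelling out only the two places where the relator set $\Omega$ and the degree cutoff $k$ enter.
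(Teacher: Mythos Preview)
Your proposal is correct and follows essentially the same route as the paper: a minimal-counterexample (equivalently, minimal-$\X$-representative) argument that strips off the top layer using Proposition~\ref{prop:syzygy_term}, replaces the resulting monomial syzygies by the genuine $\s_{ij}$ and $\s_i(\omega_\lambda;z,p,q)$, and uses the $\max$-estimates from Proposition~\ref{prop:division_grobner_module} to force a strictly smaller $\X$. The paper phrases the contradiction as ``choose $(u_1,\dots,u_t)\in\syz_{\Omega,k}\setminus\langle M\rangle_{\Omega,k}$ with $\X$ least'' rather than your ``choose a representative modulo $M$ with $\X$ least'', but this is only a cosmetic difference, and the bookkeeping you flag (the two kinds of generator, the empty-$C$ sub-case, the two-sided cofactors) is exactly what the paper tracks.
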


\begin{proof}
 Suppose to the contrary that there exists 
$(u_1,\dots,u_t) \in \bigoplus_{i=1}^tR\e'_i$ such that
\[
 (u_1,\dots, u_t) \in \syz_{\Omega,k} (\g_1,\dots,\g_t) \setminus 
 \langle M(\g_1,\dots,\g_t) \rangle_{\Omega,k}.
\]
Then we can choose such a $(u_1,\dots, u_t)$ with 
$\X= \max_{1\leq i \leq t}(\lp(u_i)\lm(\g_i))$ least.
Let $S$ be the subset of $\{1,\dots,t\}$ such that
\[
 S = \{i \in \{1,\dots, t\} | \lm(u_i)\lm(\g_i) = \X\}.
\]
Now for each $i \in \{1,\dots,t\}$ we define $u'_i$ as follows:
\[
u'_i = \begin{cases}
	u_i & \text{if } i \notin S\\
	u_i - \lt(u_i) &\text{if } i \in S.
       \end{cases} 
\]
Also, for $i\in S$, let $\lt(u_i) = c_iX'_i$, where $c_i \in K$ and
 $X'_i$ is a monomial in $R$. Since $(u_1,\dots,u_t) \in
 \syz_{\Omega,k}(\g_1,\dots,\g_t)$, we see that
\[
 \sum_{i\in S}c_iX'_i\X_i \in \bigoplus_{i=1}^m\Lt(\Omega(k-|\e_i|))\e_i
\]
and so
\[
 \sum_{i\in S}c_iX'_i\e_i \in  \syz_{\Lt(\Omega),k} (\X_i|i \in S) .
\]
Thus, by Proposition \ref{prop:syzygy_term} we have
\begin{eqnarray*}
  \sum_{i\in S}c_iX'_i\e'_i = \sum_{i<j \; i,j \in S}d_{ij}
     (\frac{\X_{ij}}{\X_i}\e_i - \frac{\X_{ij}}{\X_j}\e_j)
 + \sum_{i\in S,\omega \in \Omega \atop (z,p,q)\in
 C(\g_i,\omega)}b_{i\lambda;z}(z\e'_i)
 + \sum_{\epsilon, i\in S}p_{\epsilon i}\lm(\omega_{\epsilon i})
 q_{\epsilon i}\e'_{i}
\end{eqnarray*}
for some monomials $d_{ij}, b_{i\lambda;z}, p_{\epsilon i}, q_{\epsilon i} \in R$
 and $\omega_{\epsilon i} \in \Omega$.
Since each coordinate of the vector in the
 left-hand side of the equation above is homogeneous, and since $X'_i\X_i
 = \X$, we can choose $d_{ij}$ to be a constant multiple of
 $\frac{\X}{\X_{ij}}$. Similarly, we can choose $b_{i\lambda;z}$ to
 be a constant multiple of $\frac{\X}{z\X_{i}}$. Set 
$\bar{\omega}_{\epsilon i} := \omega_{\epsilon i} - \lm(\omega_{\epsilon i})$.
Then we have
\begin{eqnarray*}
 (u_1,\dots,u_t) &=&  \sum_{i\in S}c_iX'_i\e_i +  (u'_1,\dots,u'_t) \\
       &=&\sum_{i<j \; i,j \in S}
	d_{ij}\left(\frac{\X_{ij}}{\X_i}\e_i - \frac{\X_{ij}}{\X_j}\e_j\right)
 + \sum_{i\in S,\omega \in \Omega\atop (z,p,q)\in C(\g_i,\omega)}
      b_{i\lambda;z}(z\e'_i) \\
 &+& \sum_{\epsilon, i\in S}p_{\epsilon i}\lm(\omega_{\epsilon i})
    q_{\epsilon i}\e'_{i}
 + (u'_1,\dots,u'_t)     \\
 &=&\sum_{i<j \; i,j \in S}d_{ij}\s_{ij} +
    \sum_{i\in S,\omega \in \Omega\atop (z,p,q)\in C(\g_i,\omega)}
         b_{i\lambda;z}\s_i(\omega;z,p,q) \\
 &+& (u'_1,\dots,u'_t) +
	\sum_{i<j \; i,j \in S}d_{ij}(h_{ij1},\dots,h_{ijt})
     +\sum_{i\in S,\lambda,\atop (z,p,q)\in
           C(\g_i,\omega)}b_{i\lambda;z}(h'_{i1},\dots,h'_{it})\\
 &+& \sum_{\epsilon, i\in S} 
        p_{\epsilon,i}\omega_{\epsilon i} q_{\epsilon i}\e'_{i} 
  -  \sum_{\epsilon, i\in S}
        p_{\epsilon i}\bar{\omega}_{\epsilon i} q_{\epsilon i}\e'_{i}.
\end{eqnarray*}
We define 
\begin{eqnarray*}
  (v_1,\dots,v_t) &:=& (u'_1,\dots,u'_t) +
	\sum_{i<j \;  i,j \in S}d_{ij}(h_{ij1},\dots,h_{ijt}) +
  \sum_{i\in S,\omega \in \Omega\atop (z,p,q)\in C(\g_i,\omega)}
  b_{i\lambda;z}(h'_{i1},\dots,h'_{it}) \\
&-& \sum_{\epsilon, i\in S}
        p_{\epsilon i}\bar{\omega}_{\epsilon i} q_{\epsilon i}\e'_{i}.
\end{eqnarray*}
We note that 
$(v_1,\dots,v_t) \in \syz_{\Omega,k}(\g_1,\dots,\g_t) 
\setminus \langle M(\g_1,\dots,\g_t) \rangle_{\Omega,k}$, 
since 
$(u_1,\dots,u_t), \s_{ij}, \s_i(\omega;z,p,q) \in 
\syz_{\Omega,k}(\g_1,\dots,\g_t)$ 
and 
$(u_1,\dots, u_t) \notin \langle M(\g_1,\dots,\g_t) \rangle_{\Omega,k}$. 
We will obtain the desired contradiction by proving that $\max_{1 \leq
 \nu \leq t} (\lm(v_\nu)\lm(\g_\nu)) < \X$. For each $\nu \in \{1,\dots,
 t\}$
we have 
\begin{eqnarray*}
 \lm(v_\nu)\lm(\g_\nu) 
 &=& \lm\left(u'_\nu + \sum_{i<j \;  i,j \in S} d_{ij}\lm(h_{ij\nu}) 
     + \sum_{i\in S,\omega \in \Omega \atop (z,p,q)\in C(\g_i,\omega)}
        b_{i\lambda;z}\lm(h'_{i\nu}) 
	+ \sum_{\epsilon} 
	    p_{\epsilon \nu}\bar{\omega}_{\epsilon \nu} q_{\epsilon \nu}
     \right)\X_\nu\\
 &\leq& \max\left(\lm(u'_\nu), 
	\max_{i<j \;  i,j \in S}d_{ij}\lm(h_{ij\nu}),
	\max_{i\in S,\omega \in \Omega \atop (z,p,q)\in C(\g_i,\omega)}
	b_{i\lambda;z}\lm(h'_{i\nu}),
	\max_{\epsilon}p_{\epsilon \nu}\bar{\omega}_{\epsilon \nu} q_{\epsilon \nu},
  \right)\X_\nu
\end{eqnarray*}
where we assume that 
$p_{\epsilon \nu}\bar{\omega}_{\epsilon \nu} q_{\epsilon \nu} = 0$ if
 $\nu \notin S$.
However, by definition of $u'_\nu$, we have $\lm(u'_\nu)\X_\nu <
 X$. Also, as mentioned above, $d_{ij}$ is a constant multiple of
 $\frac{\X}{\X_{ij}}$, and hence for all $i,j \in S, i<j$, we have
\[
 d_{ij}\lm(h_{ij\nu})\X_\nu =  \frac{\X}{\X_{ij}}\lm(h_{ij\nu})\X_\nu
 \leq  \frac{\X}{\X_{ij}}\lm(\s(\g_i,\g_j)) 
<  \frac{\X}{\X_{ij}}\X_{ij} = \X.
\]
Similarly, $b_{i\lambda;z}$ is a constant multiple of
 $\frac{\X}{z\X_i}$, we have
\[
 b_{i\lambda;z}\lm(h'_{i\nu})\X_\nu = \frac{\X}{z\X_i}\lm(h'_{i\nu})\X_\nu
\leq \frac{\X}{z\X_i}\lm(\s_i(\omega;z,p,q)) 
 < \frac{\X}{z\X_i} z\X_i = \X.
\]
Also, by the definition of $\bar{\omega}_{\epsilon \nu}$,
\[
 \lm(p_{\epsilon \nu} \bar{\omega}_{\epsilon \nu} q_{\epsilon \nu}) \X_\nu < 
 \lm(p_{\epsilon \nu}) \lm(\omega_{\epsilon \nu}) \lm(q_{\epsilon \nu}) \X_\nu
 = \X.
\]
Therefore $ \lm(v_{\nu})\lm(\g_{\nu})<\X$ for each $\nu \in
\{1,\dots, t\}$ violating the condition that $\X = \max_{1\leq \nu \leq
 t}(\lm(u_\nu)\lm(\g_\nu))$ is least.
\end{proof}
Finally we study the algorithm to compute 
$\Syz_{\Omega,k}(\vec{f}_1,\dots,\vec{f}_s)$ for 
$\{\vec{f}_1,\dots,\vec{f}_s\}$ be a collection of $(\Omega,k)$-reduced
vectors in $M =\bigoplus_{i=1}^uR\e_{i}$ which may not form a Gr\"obner
basis.
First we compute an $(\Omega,k)$-Gr\"obner basis
$\{\vec{g}_1,\dots,\vec{g}_t\}$.  We again assume that
$\g_1,\dots,\g_t$ are monic. Let 
\begin{eqnarray*}
 F = \left[\begin{array}[]{c}
            \vec{f}_1\\
	    \vdots\\
	    \vec{f}_s\\
	   \end{array}\right]
\text{ and }
 G = \left[\begin{array}[]{c}
            \vec{g}_1\\
	    \vdots\\
	    \vec{g}_t\\
	   \end{array}\right]
\end{eqnarray*}
be non-zero matrix of row vectors
in $M$. 
There exists an $s \times t$ matrix $S$ and a $t\times s$ matrix $T$ with
entries in $R$ such that $F=N_{\Omega,k}(SG)$ and $G=N_{\Omega,k}(TF)$. 
Using Theorem~\ref{th:Main_theorem}, we can compute a generating
set $\{\vec{s}_1,\dots,\vec{s}_r\}$ for $\syz_{\Omega,k}(G)$.
Therefore for each $i=1,\dots,r$
\[
 \vec{0} =  N_{\Omega,k}(\s_iG) = N_{\Omega,k}(\s_iTF) = N_{\Omega,k}((\s_iT)F )
\]
and hence
\[
 \langle \s_iT | i = 1,\dots,r \rangle \subset \syz_{\Omega,k}(F).
\]
Moreover, if we let $I_s$ be the $s \times s$ identity matrix, we have
\[
  N_{\Omega,k}((I_s-ST)F) = N_{\Omega,k}(F - STF) = 
  N_{\Omega,k}(F - SG) = \mathbf{0} 
\]
and hence the rows $\mathbf{r}_1,\dots, \mathbf{r}_s$ of $I_s - TS$ 
are also in $\syz_{\Omega,k}(F)$.
\begin{theorem}
\label{th:syzygy_non-groebner}
 With the notation above we have
\[
 \syz_{\Omega,k}(\mathbf{f}_1,\dots,\mathbf{f}_s)=\langle
 \s_1T,\dots,\s_rT,\mathbf{r}_1,\dots,\mathbf{r}_s \rangle
\]
\end{theorem}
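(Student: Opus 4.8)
\emph{Proof proposal.}

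The plan is to establish the nontrivial inclusion
$\syz_{\Omega,k}(\mathbf{f}_1,\dots,\mathbf{f}_s)\subseteq\langle\s_1 T,\dots,\s_r T,\mathbf{r}_1,\dots,\mathbf{r}_s\rangle_{\Omega,k}$;
the reverse inclusion has already been checked in the paragraphs preceding the statement. Since all the modules involved are graded and the syzygy module is a graded submodule, it is enough to treat a homogeneous $\mathbf{h}=(h_1,\dots,h_s)\in\syz_{\Omega,k}(F)$, and I will arrange the transformation matrices $S$ and $T$ to be degree-preserving (which the \Groebner basis computation from $F$ does automatically). I will also use repeatedly that $\langle\Omega(k-|\e_i|)\rangle=\langle\Omega\rangle+R_{>k-|\e_i|}$, where $R_{>d}$ denotes the span of the monomials of $R$ of degree $>d$; this is a two-sided ideal, so in particular $\ker\rho_{\Omega,k}=\bigoplus_i\langle\Omega(k-|\e_i|)\rangle\e_i$ is stable under left multiplication by elements of $R$.

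The first step is the identity $\mathbf{h}=\mathbf{h}(I_s-ST)+(\mathbf{h}S)T$, where $I_s-ST$ is the $s\times s$ matrix whose rows are $\mathbf{r}_1,\dots,\mathbf{r}_s$; the first summand is $\sum_i h_i\mathbf{r}_i\in\langle\mathbf{r}_1,\dots,\mathbf{r}_s\rangle$. For the second, put $\mathbf{u}:=\mathbf{h}S$ and note that $\mathbf{u}\in\syz_{\Omega,k}(G)$: writing $SG=F+W$ with each row of $W$ in $\ker\rho_{\Omega,k}$ (possible because $F=N_{\Omega,k}(SG)$), we get $\varphi_G(\mathbf{u})=\mathbf{u}G=\mathbf{h}SG=\mathbf{h}F+\mathbf{h}W$, and $\mathbf{h}W\in\ker\rho_{\Omega,k}$ by the stability just mentioned, so $\rho_{\Omega,k}(\varphi_G(\mathbf{u}))=\rho_{\Omega,k}(\mathbf{h}F)=0$. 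Now apply Theorem~\ref{th:Main_theorem}: since $\{\s_1,\dots,\s_r\}$ is an $(\Omega,k)$-generating set of $\syz_{\Omega,k}(G)$ we may write $\mathbf{u}=\sum_{l=1}^r p_l\s_l+\sum_{j=1}^t q_j\e'_j$ with $p_l\in R$, $q_j\in\langle\Omega(k-|\g_j|)\rangle$, and, using homogeneity of $\mathbf{u}$, with all terms homogeneous. Right multiplication by $T$ gives $(\mathbf{h}S)T=\sum_l p_l(\s_l T)+\sum_j q_j\mathbf{t}_j$, where $\mathbf{t}_j$ denotes the $j$-th row of $T$; the first sum lies in $\langle\s_1 T,\dots,\s_r T\rangle$.

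It remains to show that the tail $\sum_j q_j\mathbf{t}_j$ lies in $\bigoplus_i\langle\Omega(k-|\mathbf{f}_i|)\rangle\e'_i$, i.e. in the part of $\langle\s_1 T,\dots,\s_r T,\mathbf{r}_1,\dots,\mathbf{r}_s\rangle_{\Omega,k}$ permitted by the definition of $(\Omega,k)$-generation; this is the one point that requires care, and is where the grading and the precise shape of $\Omega(\cdot)$ enter. Since $G=N_{\Omega,k}(TF)$ and $T$ is degree-preserving, $\mathbf{t}_j F$ is homogeneous of degree $|\g_j|$, so the $i$-th entry $(\mathbf{t}_j)_i$ is homogeneous of degree $|\g_j|-|\mathbf{f}_i|$. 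Decomposing $q_j=a+b$ along $\langle\Omega(k-|\g_j|)\rangle=\langle\Omega\rangle+R_{>k-|\g_j|}$, the entry $q_j(\mathbf{t}_j)_i=a(\mathbf{t}_j)_i+b(\mathbf{t}_j)_i$ lies in $\langle\Omega\rangle+R_{>k-|\mathbf{f}_i|}=\langle\Omega(k-|\mathbf{f}_i|)\rangle$: the first term because $\langle\Omega\rangle$ is a two-sided ideal, and the second because multiplying an element of $R_{>k-|\g_j|}$ by something of degree $|\g_j|-|\mathbf{f}_i|$ lands in $R_{>k-|\mathbf{f}_i|}$. Hence $\sum_j q_j\mathbf{t}_j\in\bigoplus_i\langle\Omega(k-|\mathbf{f}_i|)\rangle\e'_i$, and assembling the three pieces yields $\mathbf{h}=\sum_i h_i\mathbf{r}_i+\sum_l p_l(\s_l T)+\sum_j q_j\mathbf{t}_j\in\langle\s_1 T,\dots,\s_r T,\mathbf{r}_1,\dots,\mathbf{r}_s\rangle_{\Omega,k}$, as required. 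The main obstacle is thus not conceptual but this last bookkeeping step: verifying that the $\Omega$-part of the expansion of $\mathbf{u}=\mathbf{h}S$ provided by Theorem~\ref{th:Main_theorem} contributes nothing beyond the admissible tail after transport by $T$.
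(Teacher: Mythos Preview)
Your argument is correct and is precisely the standard route the paper points to: the paper does not give a proof but simply says it is ``straightforward and same as that of \cite[Theorem~3.4.3]{MR1287608}'', and that classical argument is exactly the decomposition $\mathbf{h}=\mathbf{h}(I_s-ST)+(\mathbf{h}S)T$ together with the observation $\mathbf{h}S\in\syz(G)$, which you reproduce. The one genuine addition over the commutative prototype is your handling of the $(\Omega,k)$-tail $\sum_j q_j\mathbf{t}_j$ via the identity $\langle\Omega(d)\rangle=\langle\Omega\rangle+R_{>d}$ and the degree-preserving nature of $T$; this is the extra bookkeeping the paper's truncated setting requires and that the reference does not cover, and you carry it out correctly.
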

The proof is straightforward and same as that of 
\cite[Theorem 3.4.3]{MR1287608}.

To compute $\Ext$ efficiently, we need to find a generating set of
the syzygy which has a small cardinality.
\begin{definition}
\label{def:minimal_generating_set}
Let $M$ be a submodule of $\bigoplus_{i=1}^sR\e_i$.
 $\{\vec{f}_1,\dots,\vec{f}_r\} \subset \bigoplus_{i=1}^sR\e_i$ is
 $(\Omega,k)$-minimal generating set of $M$ if 
$M = \langle \vec{f}_1,\dots,\vec{f}_r \rangle_{\Omega,k}$ and 
\[
 \vec{f}_j \notin \langle \vec{f}_1,\dots,\vec{f}_{j-1},
            \vec{f}_{j+1},\dots,\vec{f}_r \rangle_{\Omega,k}
\]
   for any $j=1,\dots,r$.
\end{definition}
\begin{proposition}
\label{prop:minimal_algorithm}
Let $\vec{h}_1,\dots \vec{h}_r$ be vectors in $\bigoplus_{i=1}^sR\e_i$.
The following algorithm produces a $(\Omega,k)$-minimal
 generating set of $\langle \vec{h}_1,\dots \vec{h}_r\rangle_{\Omega,k}$.
\begin{algorithm}
\caption{title}
 \label{algo:minimal_generating_set}
\begin{algorithmic}
\FOR{$i=0$ to $r$}
    \STATE $G \leftarrow $Gr\"obner basis of 
$\vec{h}_1,\dots, \vec{h}_{i-1},\vec{h}_{i+1},\dots \vec{h}_r$
    \STATE $\vec{h}_i \leftarrow N_{G,\Omega,k}(\vec{h}_i)$
\ENDFOR
\STATE Let $i_1,\dots i_{r'}$ with $i_1\leq \dots \leq i_{r'}$ be 
non-zero $\vec{h}_i's$.
\RETURN $\vec{h}_{i_1} \dots \vec{h}_{i_{r'}}$
\end{algorithmic}
\end{algorithm}
\end{proposition}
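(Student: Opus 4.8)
The plan is to establish two facts: (i) the loop never changes the submodule $\langle\vec h_1,\dots,\vec h_r\rangle_{\Omega,k}$, so the returned vectors still $(\Omega,k)$-generate it; and (ii) the surviving vectors have $K$-linearly independent images in a suitable "Nakayama quotient", which forces the second condition in Definition~\ref{def:minimal_generating_set}.

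I would set up notation as follows: write $\vec h_i^{(0)}$ for the input and $\vec h_i$ for the value of the $i$-th vector at the end of the loop, and for $i=1,\dots,r$ let $G_i$ be the Gr\"obner basis used at the $i$-th iteration, namely that of $\vec h_1,\dots,\vec h_{i-1},\vec h_{i+1}^{(0)},\dots,\vec h_r^{(0)}$ (the first $i-1$ already replaced, the rest still original), and set $L_i=\langle G_i\rangle_{\Omega,k}=\langle\vec h_1,\dots,\vec h_{i-1},\vec h_{i+1}^{(0)},\dots,\vec h_r^{(0)}\rangle_{\Omega,k}$. Fact (i) is immediate from Proposition~\ref{prop:division_grobner_module}: the vector $\vec h_i^{(0)}-N_{G_i,\Omega,k}(\vec h_i^{(0)})$ lies in $\langle G_i\rangle_{\Omega,k}=L_i$, so replacing $\vec h_i^{(0)}$ by its normal form leaves $\langle\vec h_1,\dots,\vec h_r\rangle_{\Omega,k}$ unchanged, and deleting the zero vectors at the end is harmless. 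Hence $\{\vec h_{i_1},\dots,\vec h_{i_{r'}}\}$ $(\Omega,k)$-generates $M:=\langle\vec h_1^{(0)},\dots,\vec h_r^{(0)}\rangle_{\Omega,k}$, and applying (i) right after the $i$-th step also gives $M=R\vec h_i+L_i$ for every $i$.

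For minimality, note that every vector occurring in the algorithm is homogeneous, since the reduction steps preserve degree; hence $L_i$ and $T:=\bigoplus_j\langle\Omega(k-|\e_j|)\rangle\e_j$ are graded submodules of $\bigoplus_j R\e_j$, and $T\subseteq L_i\subseteq M$. Let $I(R)\subset R$ be the ideal of positive-degree elements, put $W:=M/(I(R)M+T)$, and let $w$ denote the image of a vector in $W$. The central claim is: \emph{if $\vec h_i\neq0$ then $w_i$ is not a $K$-linear combination of $w_1,\dots,w_{i-1}$.} Indeed $\vec h_i=N_{G_i,\Omega,k}(\vec h_i^{(0)})$ is $(\Omega,k)$-reduced and nonzero, hence $\vec h_i\notin L_i$ by the uniqueness in Proposition~\ref{prop:division_grobner_module}. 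If instead $w_i=\sum_{j<i}c_jw_j$ with $c_j\in K$, then $\vec h_i-\sum_{j<i}c_j\vec h_j\in I(R)M+T$; since $\vec h_1,\dots,\vec h_{i-1}\in L_i$, $T\subseteq L_i$, and $I(R)M=I(R)(R\vec h_i+L_i)\subseteq I(R)\vec h_i+L_i$, this yields $\vec h_i=q\vec h_i+l$ with $q\in I(R)$ and $l\in L_i$; comparing the degree-$|\vec h_i|$ components, and using that $q\vec h_i$ is supported in strictly higher degrees while $L_i$ is graded, forces $\vec h_i\in L_i$, a contradiction. The claim follows, so by induction on $i$ the nonzero vectors among $\vec h_1,\dots,\vec h_r$ have $K$-linearly independent images in $W$; in particular every survivor $\vec h_{i_a}$ has $w_{i_a}\neq0$. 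Finally, if some survivor lay in $\langle\{\vec h_{i_b}:b\neq a\}\rangle_{\Omega,k}$, passing to $W$ would exhibit $w_{i_a}$ as a $K$-combination of the $w_{i_b}$, contradicting this independence; hence $\{\vec h_{i_1},\dots,\vec h_{i_{r'}}\}$ is $(\Omega,k)$-minimal.

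I expect the central claim to be the one real obstacle. A naive argument — "$\vec h_i$ is reduced modulo the other surviving vectors" — is false, because iterations $i'>i$ subsequently modify $\vec h_{i'}$ using $\vec h_i$, so the submodule generated by "the other vectors" is not stable during the loop. The remedy is to convert the reducedness of $\vec h_i$ modulo $G_i$ at the moment it is computed into a linear-independence statement in the Nakayama quotient $W$, where processing order becomes irrelevant and all survivors are compared symmetrically; the delicate point there is the graded cancellation of the feedback term $q\vec h_i$, which is precisely why one must keep track of the homogeneity of all intermediate vectors.
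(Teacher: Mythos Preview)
The paper states this proposition without proof, so there is no argument to compare against. Your Nakayama-style approach via the quotient $W=M/(I(R)M+T)$ is correct and is exactly the right device for neutralising the ``feedback'' problem you identify, namely that iterations $i'>i$ may modify $\vec h_{i'}$ using $\vec h_i$; the invariance of the generated submodule (your fact~(i)) and the inclusion $M=R\vec h_i+L_i$ are both justified by Proposition~\ref{prop:division_grobner_module} as you say.

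One small gap: you assert that ``every vector occurring in the algorithm is homogeneous, since the reduction steps preserve degree'', but this is a deduction only if the \emph{input} vectors $\vec h_i^{(0)}$ are already homogeneous, which the proposition does not state. In context this is harmless---the paper works with graded data throughout, and the immediately following Proposition~\ref{prop:minimal} adds the hypothesis explicitly---but if you want your argument to cover the proposition as written, replace the degree-comparison step by the observation that $1-q$ acts invertibly modulo $T$: since $q\in I(R)$ has only positive-degree terms and $T$ contains every vector of degree exceeding $k$, one has $q^{k+1}\vec h_i\in T\subseteq L_i$, so multiplying $(1-q)\vec h_i\in L_i$ on the left by $1+q+\cdots+q^{k}\in R$ yields $(1-q^{k+1})\vec h_i\in L_i$ and hence $\vec h_i\in L_i$, the desired contradiction, with no homogeneity needed.
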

\begin{remark0}
 In general, the output of Algorithm~\ref{algo:minimal_generating_set} 
 is not a Gr\"obner basis even if the input is.
\end{remark0}
\begin{proposition}
\label{prop:minimal}
Let $\vec{f}_1,\dots,\vec{f}_r$ be homogeneous vectors in
 $\bigoplus_{i=1}^sR\e_i$.
If $\{\vec{f}_1,\dots,\vec{f}_r\}$ is $(\Omega,k)$-minimal generating
 set, then
\[
 \syz_{\Omega,k}(\vec{f}_1,\dots,\vec{f}_s) \subset 
 \bigoplus_{i=1}^rI(R)\e'_i.
\]
\end{proposition}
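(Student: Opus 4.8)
The plan is to argue by contradiction, exploiting that $\syz_{\Omega,k}(\vec{f}_1,\dots,\vec{f}_r)$ is a \emph{graded} submodule of $N=\bigoplus_{i=1}^{r}R\e'_i$. Indeed $\varphi_F$ is degree preserving, $N$ is graded with $|\e'_i|=|\vec{f}_i|$ since the $\vec{f}_i$ are homogeneous, and $\bigoplus_{l}\langle\Omega(k-|\e_l|)\rangle\e_l=\ker\rho_{\Omega,k}$ is a graded submodule of $M$ because $\Omega$ consists of homogeneous polynomials; hence the preimage $\syz_{\Omega,k}(\vec{f}_1,\dots,\vec{f}_r)$ is graded, as is $\bigoplus_i I(R)\e'_i$. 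So if the conclusion failed there would be a \emph{homogeneous} syzygy $\vec{a}=(a_1,\dots,a_r)$, of some degree $d$, with $\vec{a}\notin\bigoplus_i I(R)\e'_i$. Homogeneity forces each nonzero $a_i$ to be homogeneous of degree $d-|\vec{f}_i|$, and $\vec{a}\notin\bigoplus_i I(R)\e'_i$ means $\eta(a_j)\neq0$ for some $j$; for a homogeneous element this is possible only when $d=|\vec{f}_j|$, in which case $a_j=c$ for some $c\in K\setminus\{0\}$.

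Next I would unwind the syzygy relation. By the definition of $\syz_{\Omega,k}$ we have $\varphi_F(\vec{a})=\sum_{i=1}^{r}a_i\vec{f}_i\in\bigoplus_{l}\langle\Omega(k-|\e_l|)\rangle\e_l$. Isolating the $j$-th summand and multiplying by $c^{-1}$,
\[
 \vec{f}_j=\sum_{i\neq j}(-c^{-1}a_i)\vec{f}_i+c^{-1}w,\qquad
 w\in\bigoplus_{l}\langle\Omega(k-|\e_l|)\rangle\e_l.
\]
Since each $\langle\Omega(k-|\e_l|)\rangle$ is a two-sided ideal, $c^{-1}w$ is still of the form $\sum_l q_l\e_l$ with $q_l\in\langle\Omega(k-|\e_l|)\rangle$, so the right-hand side exhibits $\vec{f}_j$ as an element of $\langle\vec{f}_1,\dots,\vec{f}_{j-1},\vec{f}_{j+1},\dots,\vec{f}_r\rangle_{\Omega,k}$, directly from the definition of $(\Omega,k)$-generation. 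This contradicts Definition~\ref{def:minimal_generating_set}, which says exactly that no $\vec{f}_j$ lies in the $(\Omega,k)$-span of the remaining generators. Hence every coordinate of every element of $\syz_{\Omega,k}(\vec{f}_1,\dots,\vec{f}_r)$ lies in $I(R)$, i.e.\ $\syz_{\Omega,k}(\vec{f}_1,\dots,\vec{f}_r)\subset\bigoplus_{i=1}^{r}I(R)\e'_i$.

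I expect the only delicate point to be the first reduction, to a single homogeneous syzygy possessing a unit coordinate: this is where the hypotheses genuinely enter, through homogeneity of the $\vec{f}_i$ (so that $N$ is a graded $R$-module with $|\e'_i|=|\vec{f}_i|$) and homogeneity of $\Omega$ (so that $\ker\rho_{\Omega,k}$ is a graded submodule of $M$), which together make $\syz_{\Omega,k}(\vec{f}_1,\dots,\vec{f}_r)$ graded and let me pass to one homogeneous component. Everything after that is routine bookkeeping with the definitions of $\syz_{\Omega,k}$ and of $\langle\,\cdot\,\rangle_{\Omega,k}$, requiring no further input.
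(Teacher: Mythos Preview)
Your proof is correct and follows essentially the same approach as the paper's: argue by contradiction, reduce to a homogeneous syzygy with a unit coordinate, and use the syzygy relation to exhibit the corresponding $\vec{f}_j$ as an element of $\langle\vec{f}_1,\dots,\vec{f}_{j-1},\vec{f}_{j+1},\dots,\vec{f}_r\rangle_{\Omega,k}$, contradicting minimality. Your version is in fact more careful than the paper's in justifying the passage to a homogeneous syzygy (the paper simply writes ``We can suppose that $v_i$ is homogeneous''), which is exactly the point you flagged as delicate.
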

\begin{proof}
 Assume that there exists $\vec{v} = (v_1,\dots,v_r) \in 
\syz_{\Omega,k}(\vec{f}_1,\dots,\vec{f}_s)$ such that 
$\vec{v} \notin  \bigoplus_{i=1}^rI(R)\e'_i$. Then we have 
$v_i \notin I(R)$ for some $i \in \{1,\dots,r\}$. We can suppose that
$v_i$ is homogeneous. Thus we have $v_i \in K\setminus \{0\}$. 
By the assumption of $\vec{v}$, 
$\sum v_i\vec{f}_i \in \bigoplus_{i=1}^s\Omega(k-|\e_i|)\e_i$.
It follows that $\vec{f}_j \in \langle  \vec{f}_1,\dots,\vec{f}_{i-1},
            \vec{f}_{i+1},\dots,\vec{f}_r \rangle_{\Omega,k}$.
This contradicts 
that $\{\vec{f}_1,\dots,\vec{f}_r\}$ is $(\Omega,k)$-minimal generating
set.
\end{proof}
\section{Projective resolution over the noncommutative ring}
\label{sec:proj-resol-over}
In this section, we explain an algorithm to compute a minimal projective
resolution.
Let $K,R,\Omega,k$ be as above and set $\Gamma := R/\langle \Omega \rangle$.
Suppose that we have a degree preserving $\Gamma$ homomorphism $d_n$:
\[
d_n\colon \bigoplus_{i=1}^{t} \Gamma\e'_i 
            \rightarrow \bigoplus_{i=j}^{s} \Gamma\e_j.
\]
Set $\bar{\vec{f}}_i = d_n(\e'_i)$.
We can choose $\{\vec{f}_1,\dots,\vec{f}_t\} \subset \bigoplus_{i=1}^s R\e_i$
such that
\[
 \rho_{\Omega,k}(\vec{f}_i) = \bar{\vec{f}}_i 
 \in \bigoplus_{i=1}^sR/\langle \Omega(k-|\e_i|)\e_i\rangle
 \cong_k \bigoplus_{i=1}^s\Gamma\e_i.
\]
Here we also suppose that $\{\vec{f}_1\dots \vec{f}_M\}$ is 
$(\Omega,k)$-minimal generating set.
Then we have a degree-preserving $R$-homomorphism
\[
 \tilde{d}_n \colon \bigoplus_{i=1}^t R\e_i \rightarrow \bigoplus_{i=1}^s R\e'_i
\]
defined by $\tilde{d}_n(\e'_i) = \vec{f}_i$.
Using Theorem~\ref{th:syzygy_non-groebner} and
Proposition~\ref{prop:minimal_algorithm}, we can compute an 
$(\Omega,k)$-minimal generating set $\{\vec{h}_1,\dots, \vec{h}_r\}$ of 
$\syz_{\Omega,k}(\vec{f}_1,\dots,\vec{f}_t)$. Let $\bar{\vec{h}}_i$ be
the image of $\vec{h}_i$ by the projection
\[
\bigoplus_{i=1}^t R\e'_i \rightarrow  
          \bigoplus_{i=1}^t R/\langle \Omega(k-|\e'_i|)\e'_i\rangle
	  \cong_k \bigoplus_{i=1}^t\Gamma\e'_i.
\]
Then we have a sequence of minimal $\Gamma$-homomorphisms,
which is exact in degree less than or equal to $k$,
\[
 \bigoplus_{i=1}^r \Gamma\e''_i \xrightarrow{d_{n+1}}
 \bigoplus_{i=1}^t \Gamma\e'_i \xrightarrow{d_{n}}
 \bigoplus_{i=1}^s \Gamma\e_i
\]
where $|\e''_i| = |\bar{\vec{h}}_i|$ and
 $d_{n+1}(\e''_i) = \bar{\vec{h}}_i$.
Moreover, it follows from Proposition~\ref{prop:minimal} that 
\[
 d_{n+1}\left( \bigoplus_{i=1}^r \Gamma\e''_i\right) \subset 
      I(\Gamma) \left(\bigoplus_{i=1}^t \Gamma\e'_i\right).
\]


\section*{Appendix}
\subsection*{Steenrod algebra and Gr\"{o}bner basis}
For details of the Steenrod algebra, see \cite[chapter 4]{MR1793722}.
Let $R = \F_2 \langle \Sq^1, \Sq^2, \cdots \rangle$ be a free
noncommutative graded algebra with grading $|\Sq^i| = \deg \Sq^i = i$. 

We choose a ordering on the set of monomials of $R$ to be that of
Example~\ref{ex:lex-order}.
Let $\Omega_{\mathrm{Adem}}$ 
be a subset of $R$ consisted of Adem relations,
\[
 \omega(a,b) := \Sq^a\Sq^b - \sum_{j = 0}^{[a/2]}\binom{b-1-j}{a-2j}\Sq^{a+b-j}\Sq^j
\]
for $0 < a < b$. 
We define a monomial ordering on the set of monomials of $R$ as follows:
Let $ \Sq^{a_1}\cdots \Sq^{a_k}$ and $Y = \Sq^{b_1}\cdots \Sq^{b_l}$ 
be monomials of $R$.
Then, $X \geq Y$ if $k > l$ or,
$k = l$ and the {\it right-most} nonzero entry of $(a_1 - b_1,\dots,a_k - b_k)$
is positive. It follows that $\lm(\omega(a,b)) = \Sq^a\Sq^b$.
The Steenrod algebra is given by a quotient:
\[
 {\A_2} \cong R/ \langle \Omega_{\mathrm{Adem}} \rangle 
\]
where $\langle \Omega_{\mathrm{Adem}} \rangle$ denotes the two-side ideal of
$R$ generated by $\Omega_{\mathrm{Adem}}$.
\begin{definition}
 A sequence $I = (a_1,\dots,a_n) \subset \mathbb{N}$ is called an admissible
 sequence if $a_1,\dots,a_n$ satisfies $a_i \geq 2a_{i+1}$ for
 $i=1,\dots,n-1$.
 $\Sq^I = \Sq^{a_1}\cdots \Sq^{a_n}$ is called an admissible element if
 $(a_1,\dots,a_n)$ is an admissible sequence.
\end{definition}
Considering the action of the Steenrod algebra on the
$\Z/2$-cohomology of $\mathbb{R}\mathrm{P}^\infty$, it follows that 
Admissible elements form a basis of $\F_2$-vector space 
${\A_2} = R/\langle \Omega_{\mathrm{Adem}} \rangle$ 
\cite[Theorem 4.46]{MR1793722}.
By Theorem~\ref{th:equivalent_conditions}, we have
\begin{proposition}
 $\Omega_{\mathrm{Adem}}$ is a Gr\"obner basis.
\end{proposition}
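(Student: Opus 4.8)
The plan is to route the argument through the equivalence of conditions (\ref{item:Grobner}) and (\ref{item:basis}) in Theorem~\ref{th:equivalent_conditions}, applied to the two-sided ideal $I=\langle\Omega_{\mathrm{Adem}}\rangle$. This ideal is nonzero (e.g. $\omega(1,1)=\Sq^1\Sq^1\in I$), so the theorem applies, and it then suffices to prove that the cosets of the monomials of $R$ that are reduced with respect to $\Omega_{\mathrm{Adem}}$ form an $\F_2$-basis of $R/I=\A_2$. No S\nobreakdash-polynomial computation and no appeal to Buchberger's criterion (Theorem~\ref{th:Buchberger}) will be needed, because we exploit instead the classically known basis of $\A_2$.

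First I would pin down the leading monomials, using the (right-most) monomial ordering fixed in the appendix: $\lm(\omega(a,b))=\Sq^a\Sq^b$. Indeed, every term $\Sq^{a+b-j}\Sq^j$ with $j\geq 1$ appearing in $\omega(a,b)$ has the same length $2$ as $\Sq^a\Sq^b$, and the exponent difference $(j-b,\,b-j)$ has positive right-most entry, while the $j=0$ term $\Sq^{a+b}$ has length $1<2$; hence $\Sq^a\Sq^b$ dominates. (This is precisely why the appendix uses the right-most rather than the left-most variant of Example~\ref{ex:lex-order}: with the left-most order $\Sq^a\Sq^b$ would fail to be leading.) I would also remark in passing that this ordering is genuinely a monomial order, the well-ordering requirement holding since it refines comparison by length and then by a lexicographic order on exponent sequences of a fixed finite length.

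Next I would translate ``reduced'' into ``admissible''. A monomial $\Sq^{a_1}\cdots\Sq^{a_n}$ fails to be reduced modulo $\Omega_{\mathrm{Adem}}$ exactly when it is divisible by some $\lm(\omega(a,b))=\Sq^a\Sq^b$; since divisibility of a length-$2$ monomial forces it to occur as a consecutive subword, this happens iff some adjacent pair $(a_i,a_{i+1})$ occurs as the leading monomial of an Adem relation, i.e. (the relevant index range being $a<2b$, so that these $\Sq^a\Sq^b$ are precisely the inadmissible length-two monomials) iff $a_i<2a_{i+1}$ for some $i$. Equivalently, $\Sq^{a_1}\cdots\Sq^{a_n}$ is reduced with respect to $\Omega_{\mathrm{Adem}}$ if and only if $a_i\geq 2a_{i+1}$ for all $i$, i.e. if and only if it is an admissible element in the sense of the definition above.

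Finally I would invoke the classical fact recalled just before the statement --- proved by considering the action of $\A_2$ on $H^*(\mathbb{R}\mathrm{P}^\infty;\F_2)$ --- that the cosets of the admissible monomials form an $\F_2$-basis of $\A_2=R/\langle\Omega_{\mathrm{Adem}}\rangle$ \cite[Theorem 4.46]{MR1793722}. Combined with the previous paragraph, the cosets of the monomials reduced with respect to $\Omega_{\mathrm{Adem}}$ form a basis of $R/I$, which is condition (\ref{item:basis}) of Theorem~\ref{th:equivalent_conditions}; hence condition (\ref{item:Grobner}) holds and $\Omega_{\mathrm{Adem}}$ is a Gr\"obner basis. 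The only genuinely delicate point is the leading-term identification of the first step, which hinges on the specific choice of order; the remainder is the combinatorial dictionary between reducedness and admissibility together with the cited basis theorem.
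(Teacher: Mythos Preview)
Your argument is correct and is essentially the same as the paper's: the paper also deduces the proposition from Theorem~\ref{th:equivalent_conditions} by identifying the monomials reduced with respect to $\Omega_{\mathrm{Adem}}$ with the admissible monomials and invoking the cited admissible-basis theorem for $\A_2$. You have simply spelled out in detail the leading-term computation and the reduced\,$\Leftrightarrow$\,admissible dictionary that the paper leaves implicit.
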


\bibliographystyle{amsplain}
\bibliography{/DirUsers/tomo_xi/Library/tex/books,/DirUsers/tomo_xi/Library/tex/math}

\bigskip
\address{ 
Department of Mathematics\\
Kyoto University \\
Kyoto 606-8502\\
Japan
}
{tomo\_xi@math.kyoto-u.ac.jp}
\end{document}